\newcommand{\nc}{\newcommand}
\nc{\les}{\lesssim}
\nc{\nit}{\noindent}
\nc{\nn}{\nonumber}
\nc{\D}{\partial}
\nc{\diff}[2]{\frac{d #1}{d #2}}
\nc{\diffn}[3]{\frac{d^{#3} #1}{d {#2}^{#3}}}
\nc{\pdiff}[2]{\frac{\partial #1}{\partial #2}}
\nc{\pdiffn}[3]{\frac{\partial^{#3} #1}{\partial{#2}^{#3}}}
\nc{\abs}[1] {\lvert #1 \rvert}
\nc{\cAc}{{\cal A}_c}
\nc{\cE}{{\cal E}}
\nc{\cF}{{\cal F}}
\nc{\cP}{{\cal P}}
\nc{\cV}{{\cal V}}
\nc{\cQ}{{\cal Q}}
\nc{\cGin}{{\cal G}_{\rm in}}
\nc{\cGout}{{\cal G}_{\rm out}}
\nc{\cO}{{\cal O}}
\nc{\Lav}{{\cal L}_{\rm av}}
\nc{\cL}{{\cal L}}
\nc{\cB}{{\cal B}}
\nc{\cZ}{{\cal Z}}
\nc{\cR}{{\cal R}}
\nc{\cT}{{\cal T}}
\nc{\cY}{{\cal Y}}
\nc{\cX}{{\cal X}}
\nc{\cXT}{{{\cal X}(T)}}
\nc{\cBT}{{{\cal B}(T)}}
\nc{\vD}{{\vec \mathcal{D}}}
\nc{\efield}{\mathcal{E}}
\nc{\vE}{{\vec \efield}}
\nc{\vB}{{\vec \mathcal{B}}}
\nc{\vH}{{\vec \mathcal{H}}}
\nc{\ty}{{\tilde y}}
\nc{\tu}{{\tilde u}}
\nc{\tV}{{\tilde V}}
\nc{\Pc}{{\bf P_c}}
\nc{\bx}{{\bf x}}
\nc{\bX}{{\bf X}}
\nc{\bXYZ}{{\bf XYZ}}
\nc{\bY}{{\bf Y}}
\nc{\bF}{{\bf F}}
\nc{\bS}{{\bf S}}
\nc{\dV}{{\delta V}}
\nc{\dE}{{\delta E}}
\nc{\TT}{{\Theta}}
\nc{\dPsi}{{\delta\Psi}}
\nc{\order}{{\cal O}}
\nc{\Rout}{R_{\rm out}}
\nc{\eplus}{e_+}
\nc{\eminus}{e_-}
\nc{\epm}{e_\pm}
\nc{\eps}{\varepsilon}
\nc{\vnabla}{{\vec\nabla}}
\nc{\G}{\Gamma}
\nc{\w}{\omega}
\nc{\mh}{h}
\nc{\mg}{g}
\nc{\vphi}{\varphi}
\nc{\tlambda}{\tilde\lambda}
\nc{\be}{\begin{equation}}
\nc{\ee}{\end{equation}}
\nc{\ba}{\begin{eqnarray}}
\nc{\ea}{\end{eqnarray}}
\nc{\g}{\gamma}
\nc{\ol}{\overline}
\newtheorem{theorem}{Theorem}[section]
\newtheorem{lemma}[theorem]{Lemma}
\newtheorem{prop}[theorem]{Proposition}
\newtheorem{corollary}[theorem]{Corollary}
\newtheorem{defin}[theorem]{Definition}
\nc{\pT}{\partial_T}
\nc{\pz}{\partial_z}
\nc{\pt}{\partial_t}
\nc{\la}{\langle}
\nc{\ra}{\rangle}
\nc{\infint}{\int_{-\infty}^{\infty}}
\nc{\halfwidth}{6.5cm}
\nc{\figwidth}{10cm}
\newcommand{\f}{\frac}
\nc{\nlayers}{L} \nc{\nsectors}{M}
\nc{\indicator}{\mathbf{1}}
\nc{\Rhole}{R_{\rm hole}}
\nc{\Rring}{R_{\rm ring}}
\nc{\neff}{n_{\rm eff}}
\nc{\Frem}{F_{\rm rem}}
\nc{\R}{\mathbb R}
\nc{\Z}{\mathbb Z}
\nc{\DD}{\Delta}
\nc{\cD}{\mathcal D}
\nc{\lnorm}{\left\|}
\nc{\rnorm}{\right\|}
\nc{\rnormp}{\right\|_{\ell^{p,\eps}}}
\nc{\rar}{\rightarrow}
\date{\today}
\begin{document}

\begin{abstract}
Let $H=-\Delta+V$, where $V$ is a real valued potential on $\R^2$ satisfying $|V(x)|\les \la x\ra^{-3-}$.
We prove that if zero is a regular point of the spectrum of $H=-\Delta+V$, then
$$
\|w^{-1} e^{itH}P_{ac}f\|_{L^\infty(\R^2)}\les \f1{|t|\log^2(|t|)} \|w f\|_{L^1(\R^2)},\,\,\,\,\,\,\,\, |t| >2,
$$
with $w(x)=\log^2(2+|x|)$. This decay rate was obtained by Murata in the setting of weighted $L^2$ spaces
with polynomially growing weights.
\end{abstract}

\title[Weighted Dispersive Estimate for  the Schr\"odinger Equation]{\textit{A  weighted dispersive estimate  for Schr\"{o}dinger operators in
dimension two}}

\author[M.~B. Erdo\smash{\u{g}}an, W.~R. Green]{M. Burak Erdo\smash{\u{g}}an and William~R. Green}

\address{Department of Mathematics \\
University of Illinois \\
Urbana, IL 61801, U.S.A.}
\email{berdogan@math.uiuc.edu}
\address{Department of Mathematics and Computer Science\\
Eastern Illinois University \\
Charleston, IL 61920, U.S.A.}
\email{wrgreen2@eiu.edu}
%\subjclass{35Q41, 42B20}

\maketitle
\section{Introduction}
The free Schr\"{o}dinger evolution on $\mathbb R^d$,
$$
 e^{-it\Delta}f (x)= C_d \frac{1}{t^{d/2}}\int_{\R^d} e^{-i|x-y|^2/4t}f(y) dy,
$$
satisfies the dispersive estimate
$$\| e^{-it\Delta}f \|_\infty \lesssim \f1{|t|^{d/2}} \| f\|_1.$$
In recent years many authors (see, e.g.,  \cite{JSS,RodSch,GS, goldberg, Gol2,  Sc2, GV, Yaj, FY, CCV, EG1}, and the survey article \cite{Scs}) worked on the problem of extending this bound to the perturbed Schr\"{o}dinger operator $H=-\Delta+V$, where $V$ is a real-valued potential with sufficient decay at infinity and some smoothness for $d>3$. Since the perturbed operator may have negative point spectrum one needs to consider $e^{itH}P_{ac}(H)$, where $P_{ac}(H)$ is the orthogonal projection onto the absolutely continuous subspace of $L^2(\R^2)$.
One also assumes that zero is a regular point of the spectrum of $H$. This is equivalent to the boundedness of the resolvent,
$$
R_V^\pm(\lambda^2)= R_V(\lambda^2\pm i0)=(H-(\lambda^2 \pm i0))^{-1},
$$
as an operator between certain weighted $L^2$ spaces as $\lambda\to 0$.

It is easy to see that $t^{-d/2}$ decay rate at infinity is optimal for the free evolution. In dimensions $d\geq 3$ one can not hope to have a faster decay rate for the perturbed operator. In fact, it is known that (see, e.g., \cite{Rauch,JenKat,Mur,Jen,Jen2,ES2,Yaj3, goldE, Bec, EG}) the decay rate as $t\to\infty$ is in general  slower if zero is not regular point of the spectrum. In dimensions $d=1$ and $d=2$,  zero is not a regular point of the spectrum of the Laplacian since the constant function is a resonance. Therefore, for the perturbed operator $-\Delta+V$, one may expect to have a faster dispersive decay at infinity if zero is regular. Indeed, in \cite[Theorem 7.6]{Mur}, Murata proved that if zero is a regular point of the spectrum, then for $|t|>2$
\begin{align*}
\| w_1^{-1}e^{itH}P_{ac}(H) f\|_{L^2(\R^1)} &\les |t|^{-\f32} \|w_1 f\|_{L^2(\R^1)},\\
\| w_2^{-1}e^{itH}P_{ac}(H) f\|_{L^2(\R^2)} &\les |t|^{-1}(\log |t|)^{-2}\|w_2 f\|_{L^2(\R^2)}.
\end{align*}
 Here $w_1$ and $w_2$ are weight functions growing at a polynomial rate at infinity. It is also assumed that the potential decays at a polynomial rate at infinity (for $d=2$, it suffices to assume that $w_2(x)=\la x\ra^{-3-}$ and  $|V(x)|\les \la x\ra^{-6-}$ where $\la x\ra:= (1+|x|^2)^\f12$).
This type of estimates are very useful in the study of nonlinear asymptotic stability of (multi) solitons in lower dimensions since the dispersive decay rate  in time is integrable at infinity (see, e.g., \cite{ scns, KZ, Miz}). Also see \cite{BP, SW, PW, Wed} for other applications of weighted dispersive estimates to nonlinear PDEs.

In \cite{Scs}, 
Schlag extended Murata's result for $d=1$ to the $L^1\to L^\infty$ setting. He proved that if zero is regular, then
\begin{align*}
	\|w^{-1} e^{itH}P_{ac}(H) f\|_{L^\infty(\R)}\les |t|^{-\frac{3}{2}}\|w f\|_1, \,\,\,\,\,\,\,\, |t|>2,
\end{align*}
with $w(x)=\la x\ra$ provided $\|\la x\ra^4 V\|_1<\infty$.

In this paper, we study the two dimensional case. Our main result is the following
\begin{theorem}\label{thm:main}
Let $V(x)\les \la x\ra^{-2\beta}$ for some $\beta>\f32$. If zero is a regular point of the spectrum of $H=-\Delta+V$, then we have
$$
\|w^{-1} e^{itH}P_{ac}f\|_{L^\infty(\R^2)}\les \f1{|t| \log^2(|t|)} \|w f\|_{L^1(\R^2)},\,\,\,\,\,\,\,\,|t|>2,
$$
where $w(x)=\log^2(2+|x|)$.
\end{theorem}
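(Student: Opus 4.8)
The plan is to represent the evolution through the spectral measure and reduce the theorem to a pointwise bound on the kernel of $e^{itH}P_{ac}$. By the Stone formula and the substitution $\lambda\mapsto\lambda^2$,
\[
[e^{itH}P_{ac}](x,y)=\frac{1}{\pi i}\int_0^\infty e^{it\lambda^2}\,\lambda\,\big[R_V^+(\lambda^2)-R_V^-(\lambda^2)\big](x,y)\,d\lambda,
\]
so it suffices to prove $\big|[e^{itH}P_{ac}](x,y)\big|\les \frac{w(x)w(y)}{|t|\log^2|t|}$ for $|t|>2$, since this is equivalent to the weighted $L^1\to L^\infty$ bound. I would insert a smooth cutoff $\chi(\lambda)$ supported in $[0,\lambda_0]$ and split the integral into a low-energy piece ($\chi$) and a high-energy piece ($1-\chi$). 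Throughout I would use the two-dimensional free resolvent $R_0^\pm(\lambda^2)(x,y)=\pm\frac{i}{4}H_0^\pm(\lambda|x-y|)$ together with the small-argument Hankel expansion, which produces the logarithmically divergent leading term $R_0^\pm(\lambda^2)=g^\pm(\lambda)P+G_0+O(\lambda^2\log\lambda)$, where $g^\pm(\lambda)=-\frac1{2\pi}\log\lambda\pm\frac{i}{4}$, $P$ is the rank-one operator with constant kernel $1$, and $G_0$ has the real, $\pm$-independent kernel $-\frac1{2\pi}\log\frac{|x-y|}{2}-\frac{\gamma}{2\pi}$.

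For the low-energy piece I would use the symmetric resolvent identity. Writing $V=Uv$ with $v=|V|^{1/2}$, $U=\mathrm{sgn}\,V$, one has $R_V^\pm=R_0^\pm-R_0^\pm v\,M^\pm(\lambda)^{-1}v\,R_0^\pm$ with $M^\pm(\lambda)=U+vR_0^\pm(\lambda^2)v=g^\pm(\lambda)S+A+O(\lambda^2\log\lambda)$, where $S=|v\rangle\langle v|$ is rank one and $A=U+vG_0v$ is $\pm$-independent. Since $g^\pm(\lambda)\to\infty$ as $\lambda\to0$, the inversion of $M^\pm(\lambda)$ is governed by $S$ on the range of $v$ and by the compression $QAQ$ on its orthogonal complement, where $Q$ projects off $v$; the hypothesis that zero is regular is exactly what provides invertibility of $QAQ$ on the range of $Q$, so a Feshbach/Jensen--Nenciu inversion yields $M^\pm(\lambda)^{-1}=M_\infty^{-1}+\frac1{g^\pm(\lambda)}N+O(g^{-2})$. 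Substituting this into the resolvent identity, the crucial point is a cancellation: the $g^\pm(\lambda)P$ singularity of $R_0^\pm$ is exactly matched by the leading part of $R_0^\pm v\,M^\pm(\lambda)^{-1}v\,R_0^\pm$ (because $\langle v,M^\pm(\lambda)^{-1}v\rangle=g^\pm(\lambda)^{-1}+O(g^{-2})$), so $R_V^\pm$ stays bounded as $\lambda\to0$. Moreover its $O(1)$ coefficient is built only from the $\pm$-independent operators $G_0,A$, hence it drops out of the difference, and
\[
[R_V^+-R_V^-](\lambda^2)=\Big(\tfrac1{g^+(\lambda)}-\tfrac1{g^-(\lambda)}\Big)E_1+(\text{lower order})=\frac{-i/2}{(\frac1{2\pi}\log\lambda)^2+\frac1{16}}\,E_1+\cdots,
\]
which is $\sim(\log\lambda)^{-2}E_1$ as $\lambda\to0$. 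This is the mechanism producing the $\log^{-2}$ gain.

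It then remains to estimate the oscillatory integral. I would prove the elementary lemma that for any fixed $k\ge1$ and amplitude $a$ with $|a^{(j)}(\lambda)|\les 1$,
\[
\Big|\int_0^\infty e^{it\lambda^2}\,\lambda\,\frac{\chi(\lambda)\,a(\lambda)}{(\log\lambda)^k}\,d\lambda\Big|\les\frac{1}{|t|\,(\log|t|)^k},\qquad |t|>2,
\]
by splitting at $\lambda_*=|t|^{-1/2}$: on $[0,\lambda_*]$ one bounds the modulus directly using $\int_0^{\lambda_*}\lambda\,d\lambda\sim|t|^{-1}$ and $|\log\lambda|\gtrsim\log|t|$ there, while on $[\lambda_*,\lambda_0]$ one integrates by parts against $e^{it\lambda^2}=\frac1{2it\lambda}\partial_\lambda e^{it\lambda^2}$, the boundary term at $\lambda_*$ and the $\int d\lambda/(\lambda|\log\lambda|^{k+1})$ term both contributing $\les|t|^{-1}(\log|t|)^{-k}$. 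Applying this with $k=2$ to the low-energy term, together with the weighted kernel bounds $|E_1(x,y)|\les w(x)w(y)$ (which follow by tracking the at-most $\log(2+|\cdot|)$ growth of $G_0$ acting on the decaying functions $v$ and $A^{-1}v$), gives the low-energy bound $\les \frac{w(x)w(y)}{|t|\log^2|t|}$.

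For the high-energy piece, where there is no threshold singularity, I would argue by stationary phase. After expanding $[R_V^+-R_V^-](\lambda^2)$ through a finite Born series with a controlled remainder and using the Bessel asymptotics $J_0(\lambda r)\sim(\lambda r)^{-1/2}e^{\pm i\lambda r}$, the phase $t\lambda^2\pm\lambda|x-y|$ has a critical point in $[\lambda_0,\infty)$ only when $|x-y|\gtrsim|t|$. When $|x-y|\les|t|$, repeated integration by parts in $\lambda$ yields decay faster than any power of $|t|$, which is negligible; when $|x-y|\gtrsim|t|$ the kernel is only $O(|t|^{-1})$, but then $|x|\gtrsim|t|$ or $|y|\gtrsim|t|$, so $w(x)w(y)\gtrsim(\log|t|)^2$ absorbs exactly the missing factor. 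This last point is precisely why the weight must be $\log^2$ rather than $\log$. I expect the main obstacle to be the low-energy step: carrying out the inversion of $M^\pm(\lambda)$ uniformly in $\lambda$, keeping the $\lambda$-dependent (oscillatory) remainders, and proving the weighted kernel bounds $|E_1(x,y)|\les w(x)w(y)$ with the derivative control in $\lambda$ needed to feed the resulting amplitudes into the oscillatory integral lemma.
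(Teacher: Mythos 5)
Your outline follows the same road map as the paper (Stone's formula, a low/high energy splitting, the symmetric resolvent identity with a Feshbach inversion of $M^\pm(\lambda)$, and the $(\log\lambda)^{-2}$ mechanism coming from $1/g^+-1/g^-$), but it has two concrete gaps. First, your key oscillatory integral lemma is not established by the argument you describe. After one integration by parts on $[t^{-1/2},\lambda_0]$, the derivative of the amplitude produces the term
$$
\frac{1}{t}\int_{t^{-1/2}}^{\lambda_0}\frac{d\lambda}{\lambda|\log\lambda|^{k+1}}
=\frac{1}{kt}\Big[\big(\log(1/\lambda_0)\big)^{-k}-\big(\tfrac12\log t\big)^{-k}\Big],
$$
which is of size $t^{-1}$ times a constant depending only on $\lambda_0$: the integral is dominated by its upper endpoint and converges to a positive constant as $t\to\infty$, so it does \emph{not} carry the factor $(\log t)^{-k}$ you claim. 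The logarithmic gain on the region $\lambda>t^{-1/2}$ requires a second integration by parts, so that the prefactor $t^{-2}\lambda^{-2}$ wins against the $\lambda$-integration (this is exactly what Lemma~\ref{lem:ibp} and the proof of Lemma~\ref{lem:Sll2} do), or alternatively the H\"older-shift device of Lemma~\ref{lem:ibp2}. As stated, your lemma only yields $O(1/t)$ and the entire $\log^{-2}$ improvement evaporates.

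Second, your claim that the relevant error kernels obey $|E_1(x,y)|\lesssim w(x)w(y)$ with purely logarithmic weights is unjustified, and it is not how the estimate closes. The free resolvent's error term behaves like $e^{\pm i\lambda|x-y|}(\lambda|x-y|)^{-1/2}$ for $\lambda|x-y|\gtrsim 1$, and its $\lambda$-derivatives grow polynomially in $|x-y|$; feeding these through the stationary-phase machinery inevitably produces weights like $\langle x\rangle^{3/2}\langle y\rangle^{3/2}$ (compensated by the faster rate $t^{-1-\alpha}$), not $\log^2$. The paper handles this by proving only the hybrid bound of Theorem~\ref{thm:mainineq}, namely $\sqrt{w(x)w(y)}\,t^{-1}\log^{-2}(t)+\langle x\rangle^{3/2}\langle y\rangle^{3/2}t^{-1-\alpha}$, and then interpolating with Schlag's unweighted $t^{-1}$ bound from \cite{Sc2} via $\min(1,a/b)\lesssim\log^2(a)/\log^2(b)$. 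You gesture at this absorption idea in the high-energy regime (``$w(x)w(y)\gtrsim\log^2 t$ when $|x|\gtrsim t$''), but to make it rigorous you must invoke, or reprove, the unweighted $L^1\to L^\infty$ bound, and you must apply the same device to the low-energy error terms, which is where it is actually needed. Relatedly, your error $O(\lambda^2\log\lambda)$ for $M^\pm(\lambda)$ is too optimistic: under $|V|\lesssim\langle x\rangle^{-2\beta}$ with $\beta>3/2$ the operator error is only $O(\lambda^{1/2})$ in Hilbert--Schmidt norm, with $\lambda^{-1/2}$ for its first derivative and only H\"older control beyond that, which is what forces the $t^{-1-\alpha}$ rates and the restriction $\alpha<\min(1/4,\beta-3/2)$ in the paper.
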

We note that the requirement for the weight function and the potential is much weaker than  was assumed in \cite{Mur}. We think similar bounds hold in the case of matrix Schr\"odinger operators, which we plan to address in a subsequent paper.

There are not many results on $L^1 \to L^\infty$ estimates in the two dimensional case.
In \cite{Sc2}, Schlag proved that
$$\|e^{itH}P_{ac}\|_{L^1(\R^2)\to L^\infty(\R^2)}\lesssim |t|^{-1}$$
under the decay assumption $|V|\lesssim \langle x\rangle^{-3-}$ and the assumption that zero is a regular point of the spectrum.  For the case when zero is not regular, see \cite{EG}. Yajima, \cite{Yaj2}, established that the wave operators are bounded on $L^p(\R^2)$ for $1<p<\infty$ if zero is regular.  The hypotheses
on the potential $V$ were
relaxed slightly in \cite{JenYaj}. High frequency dispersive estimates, similar to those obtained in \cite{Sc2}
were obtained by Moulin, \cite{Mou}, under an integrability condition on the potential.

We also note that standard spectral theoretic results for $H$ apply.  Under our  assumptions  we have that the spectrum of $H$ can be expressed as the absolutely continuous spectrum, the interval
$[0,\infty)$, and finitely many eigenvalues of finite multiplicity on $(-\infty,0]$.  See
\cite{RS1} for spectral theory and \cite{Stoi} for Birman-Schwinger type
bounds.

As usual, Theorem~\ref{thm:main} follows from (see, e.g., \cite{GS, Sc2, EG})
\be\label{mainineq}
	\sup_{L\geq1}\bigg| \int_0^\infty e^{it\lambda^2}\lambda \chi(\lambda/L)
	[R_V^+(\lambda^2)-R_V^-(\lambda^2)](x,y) d\lambda \bigg|
	\les \f{w(x)w(y)}{t \log^2(t)},\,\,\,\,\,\,\,\,t>2.
\ee
Here $\chi$ is an even smooth function supported in $[-\lambda_1,\lambda_1]$ and $\chi(x)=1$ for $|x|<\lambda_1/2$, and $\lambda_1$ is a sufficiently small number which is fixed throughout the paper.

In this paper we prove that
\begin{theorem} \label{thm:mainineq}
	Under the assumptions of Theorem~\ref{thm:main}, we have
	for $t>2$
	\be\label{weighteddecay}
		\sup_{L\geq1}\bigg| \int_0^\infty e^{it\lambda^2}\lambda \chi(\lambda/L)
		 [R_V^+(\lambda^2)-R_V^-(\lambda^2)](x,y) d\lambda \bigg|
		 \les \f{\sqrt{w(x)w(y)}}{t\log^2(t)}+
		\f{\la x\ra^{\f32} \la y\ra^{\f32}}{t^{1+\alpha}},
	\ee
	where $0<\alpha<\min(\f14, \beta-\f32)$.
\end{theorem}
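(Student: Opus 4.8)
The plan is to split the frequency integral into a low-energy piece, where the $\log^2 t$ gain is produced, and a high-energy piece, which I expect to contribute only the faster-decaying weighted term. Fix a cutoff $\tilde\chi$ supported in $[-\lambda_1,\lambda_1]$ and equal to $1$ on $[-\lambda_1/2,\lambda_1/2]$, and write $\chi(\lambda/L)=\chi(\lambda/L)\tilde\chi(\lambda)+\chi(\lambda/L)(1-\tilde\chi(\lambda))$. On the support of $\tilde\chi$ and for $L\ge1$ the factor $\chi(\lambda/L)$ is essentially $1$, so the low-energy contribution is effectively independent of $L$; the high-energy piece must instead be controlled uniformly in $L$, which I would do by dyadically decomposing in $\lambda$ up to the scale $L\lambda_1$ and summing.

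For the low-energy part I would use the symmetric resolvent identity
$$
R_V^\pm(\lambda^2)=R_0^\pm(\lambda^2)-R_0^\pm(\lambda^2)\,v\,M^\pm(\lambda)^{-1}\,v\,R_0^\pm(\lambda^2),\qquad M^\pm(\lambda)=U+vR_0^\pm(\lambda^2)v,
$$
where $v=|V|^{1/2}$ and $U=\operatorname{sign}V$. The essential two-dimensional input is the expansion of the free resolvent kernel near zero energy, coming from the Hankel representation $R_0^\pm(\lambda^2)=\pm\tfrac i4 H_0^\pm(\lambda|x-y|)$,
$$
R_0^\pm(\lambda^2)(x,y)=g^\pm(\lambda)+\tfrac1{2\pi}\log|x-y|+E^\pm(\lambda|x-y|),\qquad g^\pm(\lambda)=-\tfrac1{2\pi}\log\lambda+c_0\pm\tfrac i4,
$$
with $E^\pm$ carrying extra powers of $\lambda|x-y|$. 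Substituting this into $M^\pm(\lambda)$ isolates the singular rank-one part $g^\pm(\lambda)\,|v\rangle\langle v|$, and the regularity of zero is precisely what guarantees that, after a Feshbach/Jensen--Nenciu inversion relative to the splitting of $L^2$ along $v$, the inverse exists for small $\lambda$ and has the form
$$
M^\pm(\lambda)^{-1}=\frac1{h^\pm(\lambda)}\,S+\Gamma^\pm(\lambda),\qquad h^\pm(\lambda)=a\log\lambda+b\pm ic,
$$
with $S$ a fixed rank-one projection, $a,c\ne0$, and $\Gamma^\pm(\lambda)$ a $\lambda$-regular remainder.

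The mechanism for the $\log^{-2}$ gain is then visible in the difference of the leading scalars,
$$
\frac1{h^+(\lambda)}-\frac1{h^-(\lambda)}=\frac{h^-(\lambda)-h^+(\lambda)}{h^+(\lambda)h^-(\lambda)}=\frac{-2ic}{(a\log\lambda+b)^2+c^2}\sim\frac{C}{(\log\lambda)^2},\qquad \lambda\to0,
$$
so that the leading part of $[R_V^+-R_V^-](\lambda^2)(x,y)$ behaves like $(\log\lambda)^{-2}$ times a spatial factor; since the outer free resolvents contribute $\log|x-z|$ for $z$ in the support of $V$, and $\log|x-z|\les\log\la x\ra$ for large $|x|$, this spatial factor is bounded by $\log\la x\ra\,\log\la y\ra=\sqrt{w(x)w(y)}$. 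For the oscillatory integral I would substitute $u=\lambda^2$ and use the endpoint estimate
$$
\Bigl|\int_0^\infty e^{itu}\,\frac{\psi(u)}{(\log u)^2}\,du\Bigr|\les\frac1{t\log^2 t},\qquad t>2,
$$
with $\psi$ a cutoff near $u=0$, proved by one integration by parts: the boundary term vanishes since $(\log u)^{-2}\to0$, and $f'(u)\sim u^{-1}(\log u)^{-3}$ is integrable, the main contribution coming from $u\sim1/t$. This produces the main term $\sqrt{w(x)w(y)}\,t^{-1}\log^{-2}t$.

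The remaining contributions — the terms $E^\pm$ in the free resolvent expansion, the regular part $\Gamma^\pm(\lambda)$, and subleading terms of the Born-type expansion — carry additional powers of $\lambda|x-z|$. Each extra power of $\lambda$ improves the time decay (after $u=\lambda^2$ and integration by parts) to $t^{-1-\alpha}$, at the cost of turning the logarithmic spatial weights into polynomial ones $\la x\ra^{3/2}\la y\ra^{3/2}$; the range $0<\alpha<\min(\tfrac14,\beta-\tfrac32)$ is dictated by how many powers of $\lambda$ can be extracted against the logarithmic structure ($\tfrac14$) and by the decay $|V|\les\la x\ra^{-2\beta}$ needed to make the weighted Hilbert--Schmidt bounds converge ($2\beta>3+2\alpha$). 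For the high-energy piece $\chi(\lambda/L)(1-\tilde\chi(\lambda))$ I would forgo the expansion and integrate by parts repeatedly in $\lambda$, using the Hankel asymptotics of $R_0^\pm$ and convergence of the resolvent series for $\lambda\gtrsim\lambda_1$; the oscillation $e^{it\lambda^2}$ then yields decay faster than $t^{-1-\alpha}$ uniformly in $L$, again with polynomial weights, after summing the dyadic pieces. The main obstacle is the low-energy inversion of $M^\pm(\lambda)$: one must verify that the leading singularity is \emph{exactly} $(\log\lambda)^{-1}$ governed by the scalar $h^\pm$, so that the difference produces precisely two logarithms and not one, that the regularity of zero removes any more singular resonance contribution, and that every spatial factor in the main term grows no faster than a single logarithm in each variable — keeping these weights minimal for the main term while the faster-decaying term absorbs the polynomial growth, all uniformly in $L$, is the delicate bookkeeping at the heart of the argument.
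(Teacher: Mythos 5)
Your overall architecture matches the paper's (low/high energy split, symmetric resolvent identity, Feshbach--Jensen--Nenciu inversion of $M^\pm(\lambda)$, and the $(\log\lambda)^{-2}$ coming from $h_+^{-1}-h_-^{-1}$), but there is a genuine gap at the heart of the low-energy argument. It is not true that the leading part of $[R_V^+-R_V^-](\lambda^2)(x,y)$ behaves like $(\log\lambda)^{-2}$. First, the free term contributes $R_0^+-R_0^-=\frac i2 J_0(\lambda|x-y|)\to \frac i2$, an $O(1)$ constant whose oscillatory integral is exactly $-\frac1{4t}$ with no logarithmic gain. Second, in the term $R_0^\pm v\,h_\pm^{-1}S\,vR_0^\pm$ the \emph{outer} free resolvents themselves carry the divergent scalar $g^\pm(\lambda)\sim -\frac{1}{2\pi}\log\lambda$, so the numerator grows like $(\log\lambda)^2$ and $\mathcal R^\pm$ individually grows like $\log\lambda$; upon taking the $+/-$ difference the leading term is the \emph{constant} $g^--g^+=-\frac i2\|V\|_1$ (not $(\log\lambda)^{-2}$), the cross terms in $\widetilde G_0$ cancel identically, and only the remaining piece $\widetilde G_0\widetilde G_0\,(h_-^{-1}-h_+^{-1})$ decays like $(\log\lambda)^{-2}$. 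The constant piece contributes exactly $+\frac1{4t}$ and must be shown to cancel the $-\frac1{4t}$ from the free evolution (this uses $Qv=0$ to reduce $S$ to $P$ and the normalization $\int vPv=\|V\|_1$). Without exhibiting this cancellation your argument only yields the bound $1/t$, which is Schlag's result, not the claimed $1/(t\log^2 t)$.

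Two further points. Your model oscillatory estimate is not proved by the argument you give: after one integration by parts, $\int_0^c u^{-1}|\log u|^{-3}\,du$ is a finite constant, so "integrability of $f'$" only gives $O(1/t)$; to extract $1/(t\log^2t)$ you must split at $u\sim 1/t$ (where $\int_0^{1/t}u^{-1}|\log u|^{-3}du\sim \log^{-2}t$) and integrate by parts once more, or otherwise use the oscillation, on the complementary range --- this is precisely what the paper's Lemma on the $\widetilde G_0\widetilde G_0$ term does. Finally, for the high-energy piece, "convergence of the resolvent series for $\lambda\gtrsim\lambda_1$" is not available: the Born series need not converge at small fixed $\lambda_1$ without a smallness assumption on $V$. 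The paper instead truncates the Born series at a finite order and controls the tail containing $R_V^\pm$ via the limiting absorption principle in weighted $L^2$ spaces; some such device is needed in your high-energy argument as well.
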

Our proof of Theorem~\ref{thm:mainineq} will be mostly self-contained. Since we can allow polynomial growth in $x$ and $y$ for many terms that arise, the proof is somehow less technical than the proof   in \cite{Sc2}.

To obtain Theorem~\ref{thm:main}, we use the inequality
$$
\min\big(1,\f{a}{b}\big)\les \f{\log^2(a)}{\log^2(b)},\,\,\,\,\,\,\,a,b>2,
$$
and interpolate \eqref{weighteddecay} with the result of Schlag in \cite{Sc2}, which states that under the conditions of Theorem~\ref{thm:main}, one has
\be\nn
	\sup_{L\geq1} \bigg| \int_0^\infty e^{it\lambda^2}\lambda \chi(\lambda/L)
	[R_V^+(\lambda^2)-R_V^-(\lambda^2)](x,y) d\lambda \bigg| \les \frac{1}{t}.
\ee

\section{The Free Resolvent} \label{sec:exp}

In this section we discuss the properties of the free resolvent,  $R_0^\pm(\lambda^2)=[-\Delta-(\lambda^2\pm i 0)]^{-1}$, in $\R^2$.
To simplify the formulas, we   use the notation
$$
f=\widetilde O(g)
$$
to denote
$$
\frac{d^j}{d\lambda^j} f = O\big(\frac{d^j}{d\lambda^j} g\big),\,\,\,\,\,j=0,1,2,3,...
$$
If the derivative bounds hold only for the first $k$ derivatives we  write $f=\widetilde O_k (g)$.

Recall that
\begin{align}\label{R0 def}
	R_0^\pm(\lambda^2)(x,y)=\pm\frac{i}{4} H_0^\pm(\lambda|x-y|)=\pm\frac{i}{4} J_0(z)- \frac{1}{4} Y_0(z).
\end{align}
Thus, we have
\be\label{r0low2}
  R_0^+(\lambda^2)(x,y)-R_0^-(\lambda^2)(x,y) =  \frac{i}2 J_0(\lambda |x-y|).
\ee
From the series expansions for the Bessel functions, see \cite{AS},  we have, as $z\to 0$,
\begin{align}
	J_0(z)&=1-\frac{1}{4}z^2+\frac{1}{64}z^4+\widetilde O_6(z^6),\label{J0 def}\\
	Y_0(z)&=\frac{2}{\pi}(\log(z/2)+\gamma)J_0(z)+\frac{2}{\pi}\bigg(\frac{1}{4}z^2 +\widetilde O_4(z^4)\bigg)\nn \\
&=\frac2\pi \log(z/2)+\f{2\gamma}{\pi}+ \widetilde O(z^2\log(z)).\label{Y0 def}
\end{align}
Further, for $|z|>1 $, we have the representation (see, {\em e.g.}, \cite{AS})
\begin{align}\label{JYasymp2}
	 H_0^\pm(z)= e^{\pm i z} \omega_\pm(z),\,\,\,\,\quad \omega_{\pm}(z)=\widetilde O\big((1+|z|)^{-\frac{1}{2}}\big).
\end{align}
This implies that for $|z|>1$
\begin{align}\label{largeJYH}
	\mathcal C(z)=e^{iz} \omega_+(z)+e^{-iz}\omega_-(z), \qquad
	\omega_{\pm}(z)=\widetilde O\big((1+|z|)^{-\frac{1}{2}}\big),
\end{align}
 for any $\mathcal C\in\{J_0, Y_0\}$ respectively with different $\omega_{\pm}$.

In particular, for $\lambda|x-y|\les 1$, we have
\be\label{r0low}
 R_0^\pm(\lambda^2)(x,y)= \pm\frac{i}4-\frac{\gamma}{2\pi}-\frac{1}{2\pi}\log(\lambda|x-y|/2)+\widetilde O\big(\lambda^2|x-y|^2\log(\lambda|x-y|)\big).
\ee
For $\lambda|x-y|\gtrsim 1$, we have
\be\label{r0high}
 R_0^\pm(\lambda^2)(x,y) = e^{i\lambda|x-y|}\omega_+(\lambda|x-y|)+ e^{-i\lambda|x-y|}\omega_-(\lambda|x-y|).
\ee

\section{Resolvent Expansion Around the Zero Energy}
Let $U(x)=1$ if $V(x)\geq 0$ and $U(x)=-1$ if $V(x)<0$, and let $v=|V|^{1/2}$. We have $V=Uv^2$.
We use the symmetric resolvent identity, valid for $\Im\lambda>0$:
\be\label{res_exp}
R_V^\pm(\lambda^2)=  R_0^\pm(\lambda^2)-R_0^\pm(\lambda^2)vM^\pm(\lambda)^{-1}vR_0^\pm(\lambda^2),
\ee
where $M^\pm(\lambda)=U+vR_0^\pm(\lambda^2)v$.
The key issue in the resolvent expansions is the invertibility of the operator $M^\pm(\lambda)$ for small $\lambda$
under various spectral assumptions at zero. Below, using the properties of the free resolvent listed
above, we provide an expansion for the free resolvent around $\lambda=0$, and then using it obtain analogous expansions of the operator $M^\pm(\lambda)$. Similar lemmas were proved in \cite{JN} and \cite{Sc2}, however we need to  obtain slightly different error bounds. The following operator and the function arise naturally in the expansion of $M^{\pm}(\lambda)$
(see \eqref{Y0 def})
\begin{align}
	G_0f(x)&=-\frac{1}{2\pi}\int_{\R^2} \log|x-y|f(y)\,dy, \label{G0 def}\\
\label{g form}
		g^{\pm}(\lambda)&:= \|V\|_1\Big(\pm \frac{i}{4}-\frac{1}{2\pi}\log(\lambda/2)-\frac{\gamma}{2\pi} \Big).
\end{align}
\begin{lemma}\label{R0 exp cor}	
	We have the following expansion for the kernel of the free resolvent
	\begin{align*}
		R_0^{\pm}(\lambda^2)(x,y)=\frac{1}{\|V\|_1} g^{\pm}(\lambda)+G_0(x,y)
		+E_0^{\pm}(\lambda)(x,y).
	\end{align*}
	Here $G_0(x,y)$ is the kernel of the operator $G_0$ in \eqref{G0 def}, $g^{\pm}(\lambda)$ is as in \eqref{g form}, and
$E_0^{\pm}$
	satisfies the bounds
	\begin{align*}
		|E_0^{\pm}|\les \lambda^{\frac{1}{2} }|x-y|^{\frac{1}{2} }, \qquad
		|\partial_\lambda E_0^{\pm}|\les \lambda^{-\frac{1}{2} }|x-y|^{\frac{1}{2} }, \qquad|\partial_\lambda^2 E_0^{\pm}|\les \lambda^{-\frac{1}{2} }|x-y|^{\frac{3}{2}}.
	\end{align*}
\end{lemma}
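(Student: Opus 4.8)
The plan is to define the error by subtraction,
\begin{align*}
E_0^\pm(\lambda)(x,y):=R_0^\pm(\lambda^2)(x,y)-\Big(\pm\f{i}4-\f{\gamma}{2\pi}-\f1{2\pi}\log(\lambda|x-y|/2)\Big),
\end{align*}
and then to verify the three derivative bounds. The identity asserted in the lemma is purely algebraic: splitting $\log(\lambda|x-y|/2)=\log(\lambda/2)+\log|x-y|$ shows that the subtracted quantity equals $\f1{\|V\|_1}g^\pm(\lambda)+G_0(x,y)$, with $g^\pm$ and $G_0$ as in \eqref{g form} and \eqref{G0 def}. Writing $z=\lambda|x-y|$, I would split the analysis into the two regimes $z\les1$ and $z\gtrsim1$, since the right-hand sides are designed to match seamlessly at $z\sim1$.

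In the low-energy regime $z\les1$ all three bounds follow directly from \eqref{r0low}, which identifies $E_0^\pm=\widetilde O(z^2\log z)$. Since $\widetilde O$ controls derivatives in $\lambda$, it suffices to differentiate $\lambda^2|x-y|^2\log(\lambda|x-y|)$ in $\lambda$; up to lower-order terms the first two $\lambda$-derivatives are $O(|x-y|z|\log z|)$ and $O(|x-y|^2|\log z|)$. Comparing with the targets, which in the variable $z$ read $z^{\frac12}$, $\lambda^{-\frac12}|x-y|^{\frac12}=z^{-\frac12}|x-y|$ and $\lambda^{-\frac12}|x-y|^{\frac32}=z^{-\frac12}|x-y|^2$, the three bounds reduce to the elementary estimates $z^{\frac32}|\log z|\les1$ and $z^{\frac12}|\log z|\les1$ on $(0,1]$, both of which hold.

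In the high-energy regime $z\gtrsim1$ I would bound the two pieces of $E_0^\pm$ separately. For the resolvent, \eqref{r0high} with $\omega_\pm=\widetilde O((1+z)^{-\frac12})$ gives $\partial_\lambda^j R_0^\pm=|x-y|^j\partial_z^j R_0^\pm=|x-y|^j\,O(z^{-\frac12})$ for $j=0,1,2$, the oscillation $e^{\pm iz}$ preventing each $\partial_z$ from degrading the decay. For the explicit leading term only the logarithm depends on $\lambda$, contributing $O(1+|\log z|)$, $\f1{2\pi\lambda}$ and $\f1{2\pi\lambda^2}$ at orders $j=0,1,2$. Hence $|E_0^\pm|\les z^{-\frac12}+1+|\log z|\les z^{\frac12}$, and the derivative bounds follow once the explicit pieces $\lambda^{-1}$ and $\lambda^{-2}$ are absorbed: $\lambda^{-1}\les\lambda^{-\frac12}|x-y|^{\frac12}$ is equivalent to $z^{\frac12}\gtrsim1$, and $\lambda^{-2}\les\lambda^{-\frac12}|x-y|^{\frac32}$ to $z^{\frac32}\gtrsim1$.

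The main obstacle is exactly this high-frequency bookkeeping: differentiating the \emph{subtracted} logarithm produces negative powers of $\lambda$ that are absent from $R_0^\pm$ itself, and these must be dominated by the stated right-hand sides. This is what forces the error to degrade from the sharp low-energy rate $z^2\log z$ to the uniform powers appearing in the lemma; the inequalities $z^{\frac12}\gtrsim1$ and $z^{\frac32}\gtrsim1$, valid precisely because $\lambda|x-y|\gtrsim1$, are what make the absorption work and ensure the two regimes join at $z\sim1$.
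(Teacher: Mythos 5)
Your proof is correct and follows essentially the same route as the paper: both split at $\lambda|x-y|\sim 1$, use \eqref{r0low} to identify the error as $\widetilde O\big(\lambda^2|x-y|^2\log(\lambda|x-y|)\big)$ in the low regime and \eqref{r0high} together with differentiation of the subtracted logarithm in the high regime, then check that the resulting powers of $z=\lambda|x-y|$ are dominated by the stated bounds. The only cosmetic difference is that the paper glues the two regimes with a smooth cutoff $\chi(\lambda|x-y|)$ rather than a plain case split.
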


\begin{proof}
	To obtain the expansions recall \eqref{r0low}, which states that for $\lambda|x-y|\les 1$, we have
\begin{multline*}
 R_0^\pm(\lambda^2)(x,y)    = \pm\frac{i}4-\frac{\gamma}{2\pi}-\frac{1}{2\pi}\log(\lambda|x-y|/2)+\widetilde O\big(\lambda^2|x-y|^2\log(\lambda|x-y|)\big) \\
=  \f{g^{\pm}(\lambda)}{ \|V\|_1}+G_0(x,y)+\widetilde O\big(\lambda^2|x-y|^2\log(\lambda|x-y|)\big).
\end{multline*}
For $\lambda|x-y|\gtrsim 1$, using \eqref{r0high} we have
\begin{multline*}
 R_0^\pm(\lambda^2)(x,y)    = e^{i\lambda|x-y|}\omega_+(\lambda|x-y|)+ e^{-i\lambda|x-y|}\omega_-(\lambda|x-y|) \\
=  \f{g^{\pm}(\lambda)}{ \|V\|_1}+G_0(x,y) +\widetilde O\big(\log(\lambda|x-y|)\big)+ \widetilde O\big(\f{e^{i\lambda|x-y|}}{(1+\lambda|x-y|)^{1/2}}\big).
\end{multline*}
Let $\chi$ be a smooth cutoff for $[-1,1]$, and $\widetilde \chi=1-\chi$. Using the formulas above we have
\begin{align*}
		 E_0^{\pm}(\lambda)(x,y) \chi(\lambda|x-y|) & =   \chi(\lambda|x-y|) \widetilde O\big(\lambda|x-y|)^2\log(\lambda|x-y|)\big), \\
 E_0^{\pm}(\lambda)(x,y) \widetilde\chi(\lambda|x-y|) & =    \widetilde\chi(\lambda|x-y|)
\big[\widetilde O\big(\log(\lambda|x-y|\big)+ \widetilde O\big(\f{e^{i\lambda|x-y|}}{(1+\lambda|x-y|)^{1/2}}\big)\big].
\end{align*}
Combining these bounds we have
$$
		|E_0^{\pm}(\lambda)(x,y)| \lesssim  \big[(\lambda|x-y|)^{2-}\chi(\lambda|x-y|)
		+ (\lambda|x-y|)^{0+}\widetilde\chi(\lambda|x-y|) \big]    \lesssim  (\lambda|x-y|)^{\frac{1}{2} }.
$$
For $\lambda$-derivatives, note that
$$
		|\partial_\lambda E_0^{\pm}(\lambda)(x,y)|\lesssim   \f{(\lambda |x-y|)^{2-}}{\lambda} \chi(\lambda|x-y|)+  \frac{|x-y|^{1/2}}{\lambda^{1/2}}
		\widetilde\chi(\lambda|x-y|)    \lesssim  \lambda^{-\frac{1}{2} } |x-y|^{\frac{1}{2} },
$$
and
$$
		|\partial_\lambda^2 E_0^\pm(\lambda)(x,y)|\lesssim  \f{(\lambda |x-y|)^{2-}}{\lambda^2} \chi(\lambda|x-y|)+  \frac{|x-y|^{3/2}}{\lambda^{1/2}} \widetilde\chi(\lambda|x-y|)   \lesssim   \lambda^{-\frac{1}{2} } |x-y|^{\frac{3}{2} }.
$$
\end{proof}
The following corollary follows from the bounds for $\partial_\lambda E_0^\pm$ and $\partial^2_\lambda E_0^\pm$.
\begin{corollary} \label{lipbound} For $0<\alpha<1$  and $b>a>0$ we have
$$
|\partial_\lambda E_0^\pm(b)-\partial_\lambda E_0^\pm(a)|\les a^{-\f12} |b-a|^{\alpha} |x-y|^{\frac12+\alpha}.
$$
\end{corollary}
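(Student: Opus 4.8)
The plan is to derive the Hölder-type bound by interpolating between two elementary estimates for the increment $\partial_\lambda E_0^\pm(b)-\partial_\lambda E_0^\pm(a)$: a ``zeroth order'' estimate that uses only the pointwise size of $\partial_\lambda E_0^\pm$, and a ``first order'' estimate obtained from the fundamental theorem of calculus together with the bound on $\partial_\lambda^2 E_0^\pm$. Both bounds are already available from Lemma~\ref{R0 exp cor}; the corollary is then just their geometric mean.

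First I would use the triangle inequality and the bound $|\partial_\lambda E_0^\pm|\les \lambda^{-\f12}|x-y|^{\f12}$, together with $b>a$ (so that $b^{-\f12}\le a^{-\f12}$), to get
\begin{align*}
|\partial_\lambda E_0^\pm(b)-\partial_\lambda E_0^\pm(a)|\les b^{-\f12}|x-y|^{\f12}+a^{-\f12}|x-y|^{\f12}\les a^{-\f12}|x-y|^{\f12}.
\end{align*}
Next, writing the increment as $\int_a^b \partial_\lambda^2 E_0^\pm(s)\,ds$ and inserting $|\partial_\lambda^2 E_0^\pm|\les \lambda^{-\f12}|x-y|^{\f32}$,
\begin{align*}
|\partial_\lambda E_0^\pm(b)-\partial_\lambda E_0^\pm(a)|\les |x-y|^{\f32}\int_a^b s^{-\f12}\,ds=2|x-y|^{\f32}(\sqrt b-\sqrt a)\les a^{-\f12}(b-a)|x-y|^{\f32},
\end{align*}
where the last step uses the elementary identity $\sqrt b-\sqrt a=(b-a)/(\sqrt b+\sqrt a)\le (b-a)/\sqrt a$.

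Finally, for $0<\alpha<1$ I take the geometric mean, raising the first display to the power $1-\alpha$ and the second to the power $\alpha$. The two factors of $a^{-\f12}$ combine to $a^{-\f12}$, the factor $(b-a)$ contributes $(b-a)^\alpha$, and the power of $|x-y|$ becomes $\f12(1-\alpha)+\f32\alpha=\f12+\alpha$, yielding exactly $a^{-\f12}|b-a|^\alpha|x-y|^{\f12+\alpha}$, as claimed.

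This argument is essentially routine, so there is no serious obstacle; the only points requiring a little care are the use of $b>a$ to control the singular prefactor by its value at the left endpoint $a$ (as the statement demands), and the estimate on $\sqrt b-\sqrt a$ that converts $\int_a^b s^{-\f12}\,ds$ into the clean factor $a^{-\f12}(b-a)$. The hypothesis $\alpha<1$ is precisely what makes the geometric mean a genuine interpolation between the two endpoint bounds.
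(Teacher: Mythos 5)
Your proposal is correct and follows essentially the same route as the paper: the paper also derives $|\partial_\lambda E_0^\pm(b)-\partial_\lambda E_0^\pm(a)|\les a^{-1/2}|b-a|\,|x-y|^{3/2}$ from the second-derivative bound (via the mean value theorem rather than your explicit integral, which is an immaterial difference) and then interpolates with the first-derivative bound exactly as you do.
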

\begin{proof}
 The mean value theorem together with the bound on $\partial^2_\lambda E_0^\pm$ from Lemma~\ref{R0 exp cor} imply that
$$
|\partial_\lambda E_0^\pm(b)-\partial_\lambda E_0^\pm(a)|\les   a^{-1/2} |b-a|  |x-y|^{\frac32}.
$$
Interpolating this with the bound on $\partial_\lambda E_0^\pm$ from Lemma~\ref{R0 exp cor} yields the claim.
\end{proof}

\begin{lemma} \label{lem:M_exp} Let $0<\alpha<1$.
	For $\lambda>0$ define $M^\pm(\lambda):=U+vR_0^\pm(\lambda^2)v$.
	Let $P=v\langle \cdot, v\rangle \|V\|_1^{-1}$ denote the orthogonal projection onto $v$.  Then
	\begin{align*}
		M^{\pm}(\lambda)=g^{\pm}(\lambda)P+T+E_1^{\pm}(\lambda).
	\end{align*}
	Here
$T=U+vG_0v$ where $G_0$ is an
	integral operator defined in \eqref{G0 def}.
	Further, the error term satisfies the bound
	\begin{multline*}
		\big\| \sup_{0<\lambda<\lambda_1} \lambda^{-\frac{1}{2}} |E_1^{\pm}(\lambda)|\big\|_{HS}
		+\big\| \sup_{0<\lambda<\lambda_1} \lambda^{\frac{1}{2}} |\partial_\lambda E_1^{\pm}(\lambda)|\big\|_{HS}	
		\\+\big\| \sup_{0<\lambda<b<\lambda_1} \lambda^{\frac{1}{2}} (b-\lambda)^{-\alpha} |\partial_\lambda E_1^{\pm}(b)-\partial_\lambda E_1^\pm(\lambda)|\big\|_{HS}	
		\les 1,
	\end{multline*}
	provided that $v(x)\lesssim \langle x\rangle^{-\frac{3}{2}-\alpha-}$.
	
\end{lemma}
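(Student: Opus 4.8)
The decomposition itself is immediate from Lemma~\ref{R0 exp cor}. Substituting the expansion $R_0^\pm(\lambda^2)(x,y)=\|V\|_1^{-1}g^\pm(\lambda)+G_0(x,y)+E_0^\pm(\lambda)(x,y)$ into $M^\pm(\lambda)=U+vR_0^\pm(\lambda^2)v$, I would use that the kernel of $P$ is $\|V\|_1^{-1}v(x)v(y)$, so that the constant-in-$(x,y)$ piece $\|V\|_1^{-1}g^\pm(\lambda)$ produces exactly $g^\pm(\lambda)P$. The $G_0$ piece together with the $U$ coming from the definition of $M^\pm$ gives $T=U+vG_0v$, and the remainder is simply $E_1^\pm(\lambda)(x,y)=v(x)\,E_0^\pm(\lambda)(x,y)\,v(y)$. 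All the real content of the lemma is therefore in the three Hilbert--Schmidt bounds on this $E_1^\pm$.

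The mechanism I would exploit is that each power of $\lambda$ appearing in the three quantities is chosen precisely to cancel the singular $\lambda$-behaviour in Lemma~\ref{R0 exp cor} and Corollary~\ref{lipbound}, so that the supremum in $\lambda$ becomes trivial and only the $(x,y)$-profile survives. For the first term, $|E_0^\pm(\lambda)|\les\lambda^{1/2}|x-y|^{1/2}$ gives $\lambda^{-1/2}|E_1^\pm(\lambda)(x,y)|\les v(x)|x-y|^{1/2}v(y)$ uniformly in $\lambda$. For the second, $|\partial_\lambda E_0^\pm(\lambda)|\les\lambda^{-1/2}|x-y|^{1/2}$ yields, after multiplying by $\lambda^{1/2}$, the same uniform profile $v(x)|x-y|^{1/2}v(y)$. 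For the third, Corollary~\ref{lipbound} with $a=\lambda$ gives $\lambda^{1/2}(b-\lambda)^{-\alpha}|\partial_\lambda E_1^\pm(b)-\partial_\lambda E_1^\pm(\lambda)|\les v(x)|x-y|^{1/2+\alpha}v(y)$, uniformly for $0<\lambda<b<\lambda_1$.

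With the supremum dispatched, each Hilbert--Schmidt norm reduces to the convergence of an integral of the form $\int_{\R^2}\int_{\R^2}v(x)^2|x-y|^{2s}v(y)^2\,dx\,dy$, with $s=\f12$ in the first two cases and $s=\f12+\alpha$ in the third. I would estimate $|x-y|^{2s}\les\la x\ra^{2s}\la y\ra^{2s}$, using $|x-y|\le\la x\ra+\la y\ra\les\la x\ra\la y\ra$, which factors the double integral into $\big(\int_{\R^2}v(x)^2\la x\ra^{2s}\,dx\big)^2$. Under the hypothesis $v(x)\les\la x\ra^{-3/2-\alpha-}$ one has $v(x)^2\la x\ra^{1+2\alpha}\les\la x\ra^{-2-}$, which is integrable on $\R^2$; this is exactly the borderline case $s=\f12+\alpha$, and the cases $s=\f12$ are strictly easier. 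This is precisely the computation that fixes the stated decay requirement on $v$.

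The step I expect to need the most care is the third bound: it is the only place where both the Hölder exponent $\alpha$ and the full decay rate $\la x\ra^{-3/2-\alpha-}$ are genuinely used, and it is essential to invoke Corollary~\ref{lipbound} rather than a bare mean value theorem. A crude application of the mean value theorem would cost a full power of $(b-\lambda)$ and hence the profile $|x-y|^{3/2}$, forcing strictly more decay on $v$ than is available; the interpolated Hölder bound is what keeps the exponent at the borderline $\f12+\alpha$ matching the assumption. Everything else is bookkeeping once one observes that the $\lambda$-weights neutralize the singular behaviour of the error kernel.
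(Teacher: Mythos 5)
Your proof is correct and follows essentially the same route as the paper: identify $E_1^{\pm}=vE_0^{\pm}v$, transfer the pointwise bounds from Lemma~\ref{R0 exp cor} and Corollary~\ref{lipbound}, and reduce each Hilbert--Schmidt norm to the integrability of $v(x)^2|x-y|^{2s}v(y)^2$ with $s=\tfrac12$ or $s=\tfrac12+\alpha$. The paper simply cites this last fact as ``$v(x)|x-y|^k v(y)$ is Hilbert--Schmidt provided $v(x)\lesssim \langle x\rangle^{-k-1-}$'' without the computation you supply, so your write-up is the same argument with the bookkeeping made explicit.
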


\begin{proof} Note that
$$
		E_1^{\pm}(\lambda) =M^{\pm}(\lambda)-[g^{\pm}(\lambda)P+T]=  vR_0^{\pm}(\lambda^2)v - g^{\pm}(\lambda)P-vG_0v =vE_0^\pm(\lambda)v.
$$
Therefore the statement follows from Lemma~\ref{R0 exp cor} and Corollary~\ref{lipbound}, and the fact that for $k\geq 0$,
$v(x)|x-y|^k v(y)$ is Hilbert-Schmidt on $L^{2}(\R^2)$ provided that $v(x)\les \la x\ra^{-k-1-}$.
\end{proof}

Recall the following  definition from \cite{Sc2} and \cite{EG}.
\begin{defin}
	We say an operator $T:L^2(\R^2)\to L^2(\R^2)$ with kernel
	$T(\cdot,\cdot)$ is absolutely bounded if the operator with kernel
	$|T(\cdot,\cdot)|$ is bounded from $L^2(\R^2)$ to $L^2(\R^2)$.
\end{defin}
It is worth noting that finite rank operators and  Hilbert-Schmidt operators are absolutely bounded.
Also recall the following definition from \cite{JN}, also see \cite{Sc2} and \cite{EG}.
\begin{defin}\label{resondef}
Let $Q:=\mathbbm{1}-P$.
We say zero is a regular point of the spectrum
of $H = -\Delta+ V$ provided $ QTQ=Q(U + vG_0v)Q$ is invertible on $QL^2(\mathbb R^2)$.
\end{defin}
In \cite{Sc2}, it was proved that if zero is regular, then the operator $D_0:=(QTQ)^{-1}$ is absolutely bounded on $QL^2$.

Below, we discuss the   invertibility of  $M^\pm(\lambda)=U+vR_0^\pm(\lambda^2)v$, for small $\lambda$. This lemma was proved in  \cite{JN} and in \cite{Sc2}. We include the proof for completeness since we state slightly different error bounds.

\begin{lemma}\label{Minverse}
    Let $0<\alpha<1$.
    Suppose that zero is a regular point of the spectrum of  $H=-\Delta+V$. Then for   sufficiently small $\lambda_1>0$, the operators
	$M^{\pm}(\lambda)$ are invertible for all $0<\lambda<\lambda_1$ as bounded operators on $L^2(\R^2)$.
	Further, one has
	\begin{align}
	\label{M size}
        	 M^{\pm}(\lambda)^{-1}=h_{\pm}(\lambda)^{-1}S+QD_0Q+ E^{\pm}(\lambda),
	\end{align}
	Here
   	$h_\pm(\lambda)=g^\pm(\lambda)+c$  (with $c\in\R$), and
  	\be\label{S_defn}
  	 	 S=\left[\begin{array}{cc} P & -PTQD_0Q\\ -QD_0QTP & QD_0QTPTQD_0Q
		\end{array}\right]
  	\ee
	is a finite-rank operator with real-valued kernel.  Further, the error term satisfies the bounds
	\begin{multline*}
		\big\| \sup_{0<\lambda<\lambda_1} \lambda^{-\frac{1}{2} } |E^{\pm}(\lambda)|\big\|_{HS}
		+\big\| \sup_{0<\lambda<\lambda_1} \lambda^{\frac{1}{2} } |\partial_\lambda E^{\pm}(\lambda)|\big\|_{HS}	\\
		+\big\| \sup_{0<\lambda<b\les\lambda<\lambda_1}  \lambda^{\frac{1}{2}+\alpha} (b-\lambda)^{-\alpha} |\partial_\lambda E^{\pm}(b)-\partial_\lambda E^\pm(a)| \big\|_{HS}	
		\les 1,
	\end{multline*}
	provided that $v(x)\lesssim \langle x\rangle^{-\frac{3}{2}-\alpha-}$.

\end{lemma}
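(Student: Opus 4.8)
The plan is to invert $M^{\pm}(\lambda)$ by a Feshbach (Schur--complement) reduction relative to the orthogonal splitting $L^2(\R^2)=PL^2\oplus QL^2$, starting from the expansion $M^{\pm}(\lambda)=g^{\pm}(\lambda)P+T+E_1^{\pm}(\lambda)$ of Lemma~\ref{lem:M_exp}; this is the mechanism of the Jensen--Nenciu inversion lemma from \cite{JN}. The essential structural observation is that the divergent scalar $g^{\pm}(\lambda)\sim\log\lambda$ enters only through $g^{\pm}P$, so it is annihilated by $Q$ on either side. Writing $M^{\pm}$ in $2\times2$ block form relative to $P$ and $Q$, the $(Q,Q)$ block is $QM^{\pm}(\lambda)Q=QTQ+QE_1^{\pm}(\lambda)Q$, which carries no divergent part, while the divergence is confined to the $(P,P)$ block.

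First I would invert the $(Q,Q)$ block. Since zero is regular, $D_0=(QTQ)^{-1}$ is absolutely bounded on $QL^2$, and Lemma~\ref{lem:M_exp} gives $\|QE_1^{\pm}(\lambda)Q\|_{HS}\les\lambda^{1/2}$, so for $\lambda_1$ small a Neumann series yields invertibility of $QM^{\pm}(\lambda)Q$ with $(QM^{\pm}Q)^{-1}=QD_0Q+\widetilde E^{\pm}(\lambda)$, where the remainder $\widetilde E^{\pm}$ inherits from $E_1^{\pm}$ (via the product and quotient rules applied term-by-term to the geometric series) the three weighted HS bounds: size $\lambda^{1/2}$, first derivative $\lambda^{-1/2}$, and the H\"older-in-$\lambda$ control of $\partial_\lambda$. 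Next I would form the Schur complement on the one-dimensional space $PL^2=\mathrm{span}\,v$,
$$
\mathcal{S}^{\pm}(\lambda)=PM^{\pm}P-PM^{\pm}Q\,(QM^{\pm}Q)^{-1}\,QM^{\pm}P.
$$
Because $PL^2$ is one-dimensional, every operator of the form $P(\cdot)P$ is a scalar multiple of $P$; inserting the block entries and the leading term $QD_0Q$ gives $\mathcal{S}^{\pm}(\lambda)=\big(g^{\pm}(\lambda)+c\big)P+(\text{error})\cdot P=h_{\pm}(\lambda)P+(\text{error})$, where $c$ is the real scalar fixed by $PTP-PTQD_0QTP=cP$ (real since $T$, $D_0$, $P$ have real symmetric kernels). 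As $|h_{\pm}(\lambda)|\gtrsim|\log\lambda|\to\infty$ when $\lambda\to0$, the scalar $\mathcal{S}^{\pm}$ is invertible on $PL^2$ with $(\mathcal{S}^{\pm})^{-1}=h_{\pm}(\lambda)^{-1}P+(\text{error})$, and the prefactor $h_{\pm}^{-1}\to0$ renders every error contribution in this block genuinely lower order.

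Finally I would assemble the block inverse from the Feshbach formula and read off the leading terms. The $(P,P)$, $(P,Q)$, $(Q,P)$, $(Q,Q)$ entries produce $h_{\pm}^{-1}P$, $-h_{\pm}^{-1}PTQD_0Q$, $-h_{\pm}^{-1}QD_0QTP$, and $QD_0Q+h_{\pm}^{-1}QD_0QTPTQD_0Q$, which is exactly $h_{\pm}(\lambda)^{-1}S+QD_0Q$ with $S$ as in \eqref{S_defn} (and $S$ real-valued, as claimed, since all its entries are built from $P,T,D_0,Q$); everything else, including the off-diagonal pieces $PE_1^{\pm}Q$ and the products with $\widetilde E^{\pm}$, is collected into $E^{\pm}(\lambda)$. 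The main obstacle is verifying the three HS bounds on $E^{\pm}$, and in particular the H\"older-type bound on $\partial_\lambda E^{\pm}(b)-\partial_\lambda E^{\pm}(\lambda)$: this forces one to differentiate products involving $g^{\pm}$ (with $\partial_\lambda g^{\pm}\sim\lambda^{-1}$ and $\partial_\lambda h_{\pm}^{-1}\sim\lambda^{-1}(\log\lambda)^{-2}$), the Neumann series for $(QM^{\pm}Q)^{-1}$, and $E_1^{\pm}$, and then to track that each resulting term, after weighting by the prescribed power of $\lambda$, is dominated by the bounds of Lemma~\ref{lem:M_exp} and Corollary~\ref{lipbound}. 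The decay hypothesis $v(x)\les\langle x\rangle^{-\frac32-\alpha-}$ is precisely what keeps all these Hilbert--Schmidt norms finite.
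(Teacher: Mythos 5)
Your proposal is correct and follows essentially the same route as the paper: a Feshbach reduction relative to $PL^2\oplus QL^2$ producing the scalar $h_\pm(\lambda)=g^\pm(\lambda)+c$ with $c$ the trace of $PTP-PTQD_0QTP$, combined with a Neumann series to absorb $E_1^\pm$, and the same bookkeeping of the three weighted Hilbert--Schmidt bounds via Lemma~\ref{lem:M_exp} and Corollary~\ref{lipbound}. The only (inessential) difference is the order of operations: the paper first inverts the unperturbed block matrix $A(\lambda)$ exactly by the Feshbach formula, obtaining $A^{-1}=h^{-1}S+QD_0Q$, and then writes $M^{-1}=A^{-1}[\mathbbm{1}+E_1A^{-1}]^{-1}$ with a single global Neumann series, whereas you first invert $QM^\pm Q$ by a Neumann series and then apply the Schur complement to the full perturbed operator; both yield the same leading term and the same error structure.
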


\begin{proof}

	  We will give the proof for $M^+$ and drop the superscript ``$+$" from formulas.  Using Lemma~\ref{lem:M_exp}, we
	  write $M(\lambda)$ with respect to the decomposition
	$L^2(\R^2)=PL^2(\R^2)\oplus QL^2(\R^2)$.
	\begin{align*}
		M(\lambda)=\left[ \begin{array}{cc} g(\lambda)P+PTP & PTQ\\
		QTP & QTQ
		\end{array}\right]+E_1(\lambda).
	\end{align*}
	Denote the matrix component of the above equation by $A(\lambda)=\{a_{ij}(\lambda)\}_{i,j=1}^{2}$.

    Since  $QTQ$ is invertible by the assumption that zero is regular, by the Fehsbach formula invertibility of
    $A(\lambda)$ hinges upon the existence
    of $d=(a_{11}-a_{12}a_{22}^{-1}a_{21})^{-1}$. Denoting $D_0=(QTQ)^{-1}:QL^2\to QL^2$, we have
	\begin{align*}
		a_{11}-a_{12}a_{22}^{-1}a_{21}= g(\lambda)P+PTP-PTQD_0QTP  =h(\lambda) P
	\end{align*}
	with $h(\lambda)=g(\lambda)+Tr(PTP-PTQD_0QTP)=g(\lambda)+c$, where $c\in\R$ as the kernels of $T$, $QD_0Q$ and $v$ are real-valued. The invertibility of this operator on $PL^2$ for small $\lambda$ follows from \eqref{g form}.
Thus, by the
	Fehsbach formula,
	\begin{align}\nonumber
		A(\lambda)^{-1}&=\left[\begin{array}{cc} d & -da_{12}a_{22}^{-1}\\
		-a_{22}^{-1}a_{21}d & a_{22}^{-1}a_{21}da_{12}a_{22}^{-1}+a_{22}^{-1}
		\end{array}\right]\\
		&=h^{-1}(\lambda)\left[\begin{array}{cc} P & -PTQD_0Q\\ -QD_0QTP & QD_0QTPTQD_0Q
		\end{array}\right]+QD_0Q =: h^{-1}(\lambda)S+QD_0Q. \label{Ainverse}
	\end{align}
    Note that $S$ has rank at most two. This and the absolute boundedness of $QD_0Q$ imply that $A^{-1}$
    is absolutely bounded.

    	Finally, we write
    	$$
    		M(\lambda)=A(\lambda)+E_1(\lambda)=[\mathbbm{1}+E_1(\lambda) A^{-1}(\lambda)] A(\lambda).
    	$$
	Therefore, by a Neumann series expansion, we have
	\be\label{M plus S}
        M^{-1}(\lambda) =A^{-1}(\lambda)
        \big[\mathbbm{1}+E_1(\lambda) A^{-1}(\lambda)\big]^{-1}=h(\lambda)^{-1}S
        +QD_0Q+E(\lambda),
	\ee
  	The error bounds follow in light of the bounds for $E_1(\lambda)$ in Lemma~\ref{lem:M_exp} and the fact that, as an absolutely bound operator on $L^2$, $|A^{-1}(\lambda)|\les 1$, $|\partial_\lambda  A^{-1}(\lambda)|\les \lambda^{-1}$, and (for $0<\lambda<b<\lambda_1$)
  $$|\partial_\lambda  A^{-1}(\lambda)-\partial_\lambda  A^{-1}(b)|\les (b-\lambda)^\alpha \lambda^{-1-\alpha}.$$
  In the Lipschitz estimate, the factor $\lambda^{-\f12-\alpha}$  arises from the case when the derivative hits $A^{-1}(\lambda)$.	
\end{proof}

\noindent
{\bf Remark.}  Under the conditions of Theorem~\ref{thm:main}, the resolvent identity
	\begin{multline}
	    R_V^{\pm}(\lambda^2)=R_0^{\pm}(\lambda^2)-R_0^{\pm}(\lambda^2) v M^{\pm}(\lambda)^{-1}v
	    R_0^{\pm}(\lambda^2) \\ \label{resolvent id}
=R_0^{\pm}(\lambda^2)-R_0^{\pm}(\lambda^2) \frac{v S v}{h_\pm(\lambda)}
	    R_0^{\pm}(\lambda^2)
-R_0^{\pm}(\lambda^2)  v QD_0Q v
	    R_0^{\pm}(\lambda^2) - R_0^{\pm}(\lambda^2)  v E^\pm(\lambda) v
	    R_0^{\pm}(\lambda^2)
	\end{multline}
	holds as an  operator identity  between the spaces $L^{2,\frac{1}{2}+}(\R^2)$ and $ L^{2,-\frac{1}{2}-}(\R^2)$, as in the
	limiting absorption principle, \cite{agmon}.

We complete this section by noting that for fixed $x,y$ the kernel $R_V^\pm(\lambda^2)(x,y)$ of the  resolvent  remains bounded as $\lambda \to 0$. This is because of a cancellation between the first and second summands of the second line in \eqref{resolvent id}. A consequence of this cancellation will be crucial in the next section, see Proposition~\ref{freeevol} and Proposition~\ref{prop:stone2_2}.

\section{Proof of Theorem~\ref{thm:mainineq} for Low Energies}\label{sec:low energy}
In this section we prove Theorem~\ref{thm:mainineq} for low energies. Let $\chi$ be a smooth cut-off for $[0,\lambda_1]$ as in the introduction, where $\lambda_1$ is sufficiently small so that the expansions in the previous section are valid.
We have
\begin{theorem}\label{lowprop} Fix $0<\alpha<1/4$. Let $v(x)\lesssim \la x\ra^{-\f32-\alpha-}.$
For any $t > 2$, we have
\be\label{stone2}
\Big|\int_0^\infty e^{it\lambda^2}\lambda \chi(\lambda)  [R_V^+(\lambda^2)-R_V^-(\lambda^2)](x,y)
d\lambda\Big| \les \frac{\sqrt{w(x)w(y)}}{t\log^2(t)}+\f{\la x\ra^{\f32 } \la y\ra^{\f32 }}{t^{1+\alpha}}.
\ee
\end{theorem}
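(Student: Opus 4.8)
The plan is to insert the symmetric resolvent identity \eqref{resolvent id} into the Stone-formula integral and analyze $R_V^+-R_V^-$ term by term. Writing $R_0^\pm(x,y)=g^\pm(\lambda)/\|V\|_1+G_0(x,y)+E_0^\pm(\lambda)(x,y)$ as in Lemma~\ref{R0 exp cor} and $h_\pm(\lambda)=g^\pm(\lambda)+c$, the difference splits into four groups: the free difference $\frac i2 J_0(\lambda|x-y|)$; the rank-$\le 2$ term $-R_0^\pm \frac{vSv}{h_\pm}R_0^\pm$; the term $-R_0^\pm vQD_0Qv R_0^\pm$; and the error $-R_0^\pm vE^\pm v R_0^\pm$. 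The first regrouping I would make is to combine the free difference with the $P$-block of $S$ from \eqref{S_defn}: individually each blows up like $\log\lambda$ as $\lambda\to 0$, but (as noted after \eqref{resolvent id}) the $g^\pm/\|V\|_1$ divergences cancel. Indeed $R_0^\pm vPvR_0^\pm(x,y)=A^\pm(x)A^\pm(y)/\|V\|_1$ with $A^\pm(x)=g^\pm(\lambda)+\int_{\R^2}G_0(x,z)|V(z)|\,dz+\cdots$, so subtracting against $R_0^\pm=g^\pm/\|V\|_1+\cdots$ kills the leading $g^\pm$. For every block of $S$ carrying a $Q$, and for the $QD_0Q$ term, the same $g^\pm$-divergence is removed automatically: since $Pv=v$ gives $Qv=0$, the pairing of the constant $g^\pm/\|V\|_1$ against $v$ through the adjacent $Q$ cancels.

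The heart of the argument is the leading term, where I expect the decay $t^{-1}\log^{-2}t$. After the cancellation the surviving $\lambda$-dependence of the singular term enters only through $1/h_\pm(\lambda)$, and the Stone formula needs the difference $\frac{1}{h_+(\lambda)}-\frac{1}{h_-(\lambda)}$. Since $h_\pm(\lambda)=a(\lambda)\pm i\|V\|_1/4$ with $a(\lambda)=-\frac{\|V\|_1}{2\pi}\log(\lambda/2)+O(1)$ real (see \eqref{g form}), one has
\[
\frac{1}{h_+(\lambda)}-\frac{1}{h_-(\lambda)}=\frac{-i\|V\|_1/2}{a(\lambda)^2+(\|V\|_1/4)^2},
\]
which is of size $\sim 1/\log^2\lambda$. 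I would then estimate $\int_0^\infty e^{it\lambda^2}\lambda\chi(\lambda)[\cdots]\,d\lambda$: the substitution $\lambda=s/\sqrt t$ shows the integral concentrates near $\lambda\sim t^{-1/2}$, where $\log^2\lambda\sim\frac14\log^2 t$, so the $\log^{-2}\lambda$ factor produces the gain $\log^{-2}t$ on top of the $t^{-1}$ furnished by the oscillation. The weight $\sqrt{w(x)w(y)}=\log(2+|x|)\log(2+|y|)$ emerges because the $x,y$-dependence enters through $G_0(x,z)=-\frac1{2\pi}\log|x-z|$ (and $G_0(x,y)$) paired against the fast-decaying $v^2=|V|$, each factor contributing at most a single $\log(2+|\cdot|)$.

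For the remaining pieces — the $Q$-blocks of $S$, the $QD_0Q$ term, and the error term $-R_0^\pm vE^\pm vR_0^\pm$ — I expect the faster rate $t^{-1-\alpha}$ but with polynomial weights. The key tool is an oscillatory-integral estimate driven by exactly the three norms calibrated in Lemma~\ref{Minverse} and Lemma~\ref{R0 exp cor}: a size bound $\lambda^{-1/2}$, a first-derivative bound $\lambda^{1/2}$, and a Hölder-$\alpha$ modulus $\lambda^{1/2+\alpha}(b-\lambda)^{-\alpha}$ for $\partial_\lambda$. One integration by parts via $\lambda e^{it\lambda^2}=\frac{1}{2it}\partial_\lambda e^{it\lambda^2}$ extracts $1/t$; the size bound controls the region $\lambda\les t^{-1/2}$ (giving $t^{-1/4}\le t^{-\alpha}$ since $\alpha<\tfrac14$), while the Hölder bound supplies the extra $t^{-\alpha}$ on the oscillatory region $\lambda\gtrsim t^{-1/2}$. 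The polynomial weights $\la x\ra^{3/2}\la y\ra^{3/2}$ appear here because the $\lambda$-derivatives fall on $E_0^\pm$, whose bounds in Lemma~\ref{R0 exp cor} carry factors $|x-z|^{3/2}\les\la x\ra^{3/2}\la z\ra^{3/2}$; the surplus $\la z\ra^{3/2}$ is absorbed by $v(z)\les\la z\ra^{-3/2-\alpha-}$ to keep the Hilbert--Schmidt norms finite.

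The main obstacle I anticipate is the leading term: one must simultaneously realize the cancellation precisely enough that the integral converges as $\lambda\to 0$, and then track the logarithms through the oscillatory integral sharply enough to extract exactly $\log^{-2}t$ rather than a weaker power, all while keeping the spatial growth down to the logarithmic weight $w=\log^2(2+|\cdot|)$. A secondary difficulty is the oscillatory-integral lemma for the Hölder-regular terms, where no second $\lambda$-derivative is available near $\lambda=0$ and one must exploit cancellation on $\lambda\gtrsim t^{-1/2}$ in place of a second integration by parts.
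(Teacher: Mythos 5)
Your proposal is correct and follows essentially the same route as the paper: the same symmetric-resolvent decomposition, the same identification of $\frac{1}{h_+(\lambda)}-\frac{1}{h_-(\lambda)}\sim \log^{-2}\lambda$ as the source of the $t^{-1}\log^{-2}t$ decay, and the same one-integration-by-parts-plus-H\"older-modulus oscillatory estimate (the paper's Lemmas~\ref{lem:ibp} and~\ref{lem:ibp2}) for the $t^{-1-\alpha}$ remainder terms. The only organizational difference is that you cancel the leading constant by grouping the free resolvent with the $P$-block of $S$ at the level of the $\lambda$-integrand, whereas the paper integrates each piece separately (Propositions~\ref{freeevol} and~\ref{prop:stone2_2}) and cancels the resulting $\mp\frac{1}{4t}$ contributions afterwards.
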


We start with a simple lemma:
\begin{lemma} \label{lem:ibp} For $t>2$, we have
$$
\Big|\int_0^\infty e^{it\lambda^2} \lambda \, \mathcal E(\lambda) d\lambda -\f{i\mathcal E(0)}{2t}\Big| \les \f1t\int_0^{t^{-1/2}}|\mathcal E^\prime(\lambda)| d\lambda+ \Big|\frac{\mathcal{E}^\prime(t^{-1/2})}{t^{3/2}}\Big|
+\f1{t^2}\int_{t^{-1/2}}^\infty \Big|\Big(\frac{\mathcal E^\prime(\lambda)}{\lambda}\Big)^\prime\Big| d\lambda.
$$
\end{lemma}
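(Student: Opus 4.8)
The plan is to prove this integration-by-parts estimate for the oscillatory integral $I(t)=\int_0^\infty e^{it\lambda^2}\lambda\,\mathcal E(\lambda)\,d\lambda$ by splitting the frequency range at the natural scale $\lambda\sim t^{-1/2}$, dictated by the stationary-phase heuristic: on $[0,t^{-1/2}]$ the phase $t\lambda^2$ does not yet oscillate, while on $[t^{-1/2},\infty)$ it does, so integration by parts is profitable there. First I would write $I(t)=\int_0^{t^{-1/2}}+\int_{t^{-1/2}}^\infty$ and call these $I_1$ and $I_2$.

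For $I_1$, the idea is to extract the main term $\tfrac{i\mathcal E(0)}{2t}$ by a single integration by parts against the primitive of $e^{it\lambda^2}\lambda$, namely $\frac{d}{d\lambda}\big(\frac{1}{2it}e^{it\lambda^2}\big)=e^{it\lambda^2}\lambda$. Integrating by parts over $[0,t^{-1/2}]$ gives $\big[\frac{1}{2it}e^{it\lambda^2}\mathcal E(\lambda)\big]_0^{t^{-1/2}}-\frac{1}{2it}\int_0^{t^{-1/2}}e^{it\lambda^2}\mathcal E'(\lambda)\,d\lambda$. The lower endpoint produces exactly $-\frac{1}{2it}\mathcal E(0)=\frac{i\mathcal E(0)}{2t}$, which is the subtracted main term; the upper endpoint contributes $\frac{1}{2it}e^{i}\mathcal E(t^{-1/2})$, and the remaining integral is bounded in absolute value by $\frac1{2t}\int_0^{t^{-1/2}}|\mathcal E'(\lambda)|\,d\lambda$, matching the first term on the right-hand side. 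The upper-endpoint term $\tfrac{1}{2t}|\mathcal E(t^{-1/2})|$ will need to be absorbed, and the cleanest route is to write $\mathcal E(t^{-1/2})=\mathcal E(0)+\int_0^{t^{-1/2}}\mathcal E'(\lambda)\,d\lambda$ but this reintroduces $\mathcal E(0)$; since the lemma does not list a bare $\tfrac1t|\mathcal E(0)|$ term on the right, the intended reading is that the boundary terms at $\lambda=t^{-1/2}$ from $I_1$ and $I_2$ should be combined before estimating, so I would defer bounding the upper endpoint of $I_1$ and pair it with the corresponding lower endpoint arising from $I_2$.

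For $I_2$, I would integrate by parts twice, each time using $e^{it\lambda^2}\lambda\,d\lambda=d\big(\frac{1}{2it}e^{it\lambda^2}\big)$ but after the first step writing the integrand as $e^{it\lambda^2}\lambda\cdot\frac{\mathcal E'(\lambda)}{\lambda}$ so that the second primitive again reproduces the factor $\lambda$. Concretely, the first integration by parts on $[t^{-1/2},\infty)$ yields a boundary term $-\frac{1}{2it}e^{i}\mathcal E(t^{-1/2})$ (which is designed to cancel the leftover upper-endpoint term from $I_1$, up to the phase factor $e^i$) plus $-\frac{1}{2it}\int_{t^{-1/2}}^\infty e^{it\lambda^2}\mathcal E'(\lambda)\,d\lambda$. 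Rewriting $\mathcal E'(\lambda)=\lambda\cdot\frac{\mathcal E'(\lambda)}{\lambda}$ and integrating by parts a second time produces the boundary term $\frac{1}{(2it)^2}e^{i}\,\frac{\mathcal E'(t^{-1/2})}{t^{-1/2}}$, whose modulus is $\frac{1}{4t^2}\cdot t^{1/2}|\mathcal E'(t^{-1/2})|=\tfrac14|\mathcal E'(t^{-1/2})|/t^{3/2}$, matching the second term on the right, together with a remaining integral bounded by $\frac{1}{4t^2}\int_{t^{-1/2}}^\infty\big|\big(\frac{\mathcal E'(\lambda)}{\lambda}\big)'\big|\,d\lambda$, the third term.

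The main obstacle, and the point requiring care, is the bookkeeping of the boundary terms at the matching point $\lambda=t^{-1/2}$: the upper-endpoint contribution from $I_1$ and the first-order boundary contribution from $I_2$ each carry a factor $\tfrac1t\mathcal E(t^{-1/2})$, and one must check they cancel (they share the common phase value $e^{it(t^{-1/2})^2}=e^i$ and opposite signs from the orientation of the two integrals) so that no uncontrolled $\tfrac1t|\mathcal E(t^{-1/2})|$ term survives; only the second-order boundary term $\tfrac{1}{t^{3/2}}|\mathcal E'(t^{-1/2})|$ is meant to remain. I would also need the mild decay hypothesis that the boundary terms at $\lambda=\infty$ vanish, which follows from the decay of $\mathcal E$ and $\mathcal E'$ implicit in the resolvent expansions (e.g.\ $\mathcal E$ built from $E_0^\pm$ and oscillatory factors), so that both integrations by parts on the unbounded interval are legitimate.
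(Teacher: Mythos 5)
Your proof is correct and essentially the same as the paper's: same antiderivative $e^{it\lambda^2}/(2it)$, same splitting point $t^{-1/2}$, one integration by parts on the low piece and two on the high piece, with the main term coming from the $\lambda=0$ boundary. The only (cosmetic) difference is that the paper performs the first integration by parts over all of $[0,\infty)$ \emph{before} splitting, so the boundary terms at $\lambda=t^{-1/2}$ whose cancellation you verify never appear.
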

\begin{proof}
To prove this lemma we  integrate by parts once using the identity $e^{it\lambda^2}\lambda=\partial_\lambda e^{it\lambda^2}/(2it)$, and then divide the integral into pieces on the sets $[0,t^{-1/2}]$ and $[t^{-1/2},\infty)$. Finally integrate by parts once more in the latter piece:
\begin{multline*}
 \int_0^\infty e^{it\lambda^2} \lambda \mathcal E(\lambda) d\lambda = \f{i\mathcal E(0)}{2t} +  \f{i}{2t}  \int_0^{t^{-1/2}} e^{it\lambda^2}  \mathcal E^\prime(\lambda) d\lambda+\f{i}{2t}  \int_{t^{-1/2}}^\infty e^{it\lambda^2} \lambda   \f{\mathcal E^\prime(\lambda)}{\lambda} d\lambda\\
=  \f{i\mathcal E(0)}{2t} +  \f{i}{2t}  \int_0^{t^{-1/2}} e^{it\lambda^2}  \mathcal E^\prime(\lambda) d\lambda-\f1{4t^2}\frac{ \mathcal E^\prime(\lambda)}{\lambda}\Big|_{\lambda=t^{-1/2}} -\f1{4t^2}
\int_{t^{-1/2}}^\infty e^{it\lambda^2}  \Big(\f{ \mathcal E^\prime(\lambda)}{\lambda}\Big)^\prime d\lambda.
\end{multline*}

\end{proof}

We start with the contribution of the free resolvent to \eqref{stone2}. Note that it is easy to obtain this statement for the free evolution using its convolution kernel. We choose to present the proof below to introduce some of the methods we will employ throughout the paper.
\begin{prop}\label{freeevol} We have
$$\int_0^\infty e^{it\lambda^2}\lambda \chi(\lambda)  [R_0^+(\lambda^2)-R_0^-(\lambda^2)](x,y)
d\lambda=-\frac1{4t} +O\Big(\f{\la x \ra^{\f32 } \la y\ra^{\f32 }}{t^{\f54}}\Big).
$$
\end{prop}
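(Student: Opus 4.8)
The plan is to start from \eqref{r0low2}, which gives $[R_0^+(\lambda^2)-R_0^-(\lambda^2)](x,y)=\f{i}2 J_0(\lambda r)$ with $r:=|x-y|$, so that the integral in question equals $\int_0^\infty e^{it\lambda^2}\lambda\,\mathcal E(\lambda)\,d\lambda$ for $\mathcal E(\lambda)=\f{i}2\chi(\lambda)J_0(\lambda r)$. I would then apply Lemma~\ref{lem:ibp}. The main term is immediate: since $\chi(0)=J_0(0)=1$ we have $\mathcal E(0)=\f{i}2$, hence $\f{i\mathcal E(0)}{2t}=-\f1{4t}$, exactly the claimed leading behavior. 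Everything else is an error, and the task reduces to showing that the three quantities on the right of Lemma~\ref{lem:ibp} are all $\les (1+r)^{3/2}t^{-5/4}$. The proposition then follows from the elementary inequality $1+|x-y|\les \la x\ra\la y\ra$, which yields $(1+r)^{3/2}\les \la x\ra^{3/2}\la y\ra^{3/2}$.

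For the error bounds I would record the Bessel estimates from \eqref{J0 def} and \eqref{largeJYH}: for $z\les1$ one has $|J_0(z)|\les1$, $|J_0'(z)|\les z$, $|J_0''(z)|\les1$, while for $z\gtrsim1$ one has $|J_0^{(j)}(z)|\les z^{-1/2}$ for $j=0,1,2$. Differentiating $\mathcal E$ produces one factor of $r$ for each derivative landing on $J_0(\lambda r)$. The natural move is to split each $\lambda$-integral at $\lambda=1/r$, i.e.\ according to whether the Bessel argument $\lambda r$ is $\les1$ or $\gtrsim1$. On the small-argument part the bound $|J_0'(\lambda r)|\les\lambda r$ supplies enough powers of $\lambda$ that all three terms are $\les r^2 t^{-2}$ (up to a harmless logarithm in the term~(III) integral), and $r^2 t^{-2}\les(1+r)^{3/2}t^{-5/4}$ whenever $r\les t^{3/2}$, which always holds in the relevant range.

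The hard part is the large-argument regime $\lambda r\gtrsim1$, where $J_0$ genuinely oscillates and its derivatives decay only like $(\lambda r)^{-1/2}$; this is what produces the sharp $t^{-5/4}$ rate. There the integrand in the term~(III) piece is $\les r^{3/2}\lambda^{-3/2}+r^{1/2}\lambda^{-5/2}$, and integrating from the lower endpoint $\max(1/r,t^{-1/2})$ gives $\les r^{1/2}t^{-5/4}$ when $r\gtrsim t^{1/2}$ and $\les r^2t^{-2}$ otherwise; the first is the bottleneck and is absorbed into $(1+r)^{3/2}t^{-5/4}$ using that $r\gtrsim1$ throughout this regime (the large-argument region is empty unless $r\gtrsim\lambda_1^{-1}$). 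Terms~(I) and~(II) are handled identically and are no worse. Finally, the $\chi'$ and $\chi''$ contributions are supported on $\lambda\sim\lambda_1$, where $\lambda$ is bounded below and $|J_0^{(j)}(\lambda r)|\les(1+r)^{-1/2}$, so they contribute at most $(1+r)^{3/2}t^{-2}$. It is worth emphasizing the structural point that makes this last step painless: the target $\la x\ra^{3/2}\la y\ra^{3/2}t^{-5/4}$ is bounded below by $t^{-5/4}\gtrsim t^{-2}$ since $\la x\ra,\la y\ra\ge1$, so the crude absolute-value bound of Lemma~\ref{lem:ibp} in the cutoff region already suffices and no delicate non-stationary-phase analysis is needed there. (For $2<t$ bounded one argues trivially, bounding the integral over $\mathrm{supp}\,\chi$ by a constant.)
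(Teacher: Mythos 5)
Your argument is correct and is essentially the paper's: both obtain the $-\frac{1}{4t}$ term from the value of the integrand at $\lambda=0$ and control the remainder through Lemma~\ref{lem:ibp} together with the small-/large-argument Bessel asymptotics. The only real difference is bookkeeping --- the paper first peels off the constant $\frac{i}{2}$ via Lemma~\ref{R0 exp cor}, whose unified bounds $|\partial_\lambda E_0^{\pm}|\les \lambda^{-1/2}|x-y|^{1/2}$ and $|\partial_\lambda^2 E_0^{\pm}|\les \lambda^{-1/2}|x-y|^{3/2}$ already encode the split at $\lambda|x-y|\sim 1$ that you carry out by hand (a couple of your intermediate claims, e.g.\ that $r\les t^{3/2}$ ``always holds'' and the $r^{1/2}t^{-5/4}$ bound for the $r^{3/2}\lambda^{-3/2}$ piece, are slightly loose for very large $|x-y|$, but in that regime the integration domain shrinks and the final bound follows by an even easier estimate).
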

\begin{proof}
Using Lemma~\ref{R0 exp cor}, we have
$$
R_0^+-R_0^-=\f{i}{2}+E_0^+(\lambda)-E_0^-(\lambda).
$$
Therefore we can rewrite the $\lambda$ integral above as
\be\nn
\frac{i}{2}\int_0^\infty e^{it\lambda^2}\lambda \chi(\lambda)
d\lambda + \int_0^\infty e^{it\lambda^2} \lambda  \chi(\lambda) (E_0^+(\lambda)-E_0^-(\lambda)) d\lambda=:A+B.
\ee
Note that by integrating by parts twice as in the proof of Lemma~\ref{lem:ibp} we obtain
\be\label{Aest}
A=-\frac{1}{4t}-\frac{i}{8t^2}\int_0^\infty e^{it\lambda^2}\frac{d}{d\lambda}\Big(\f{\chi^\prime(\lambda)}{\lambda}\Big)d\lambda\Big)=-\frac{1}{4t}+O(t^{-2}).
\ee
Using the bounds in Lemma~\ref{R0 exp cor} for $\mathcal E(\lambda)= \chi(\lambda) (E_0^+(\lambda)-E_0^-(\lambda)) $, we see that $\mathcal E(0)=0$, and
\begin{align*}
|\partial_\lambda \mathcal E(\lambda)| &\les   \lambda^{-\f12}|x-y|^{\f12 } \les \lambda^{-\f12 } \sqrt{\la x \ra \la y\ra},\\
 \Big|\partial_\lambda\Big(\frac{\partial_\lambda \mathcal E (\lambda)}{\lambda}\Big)\Big|& \les \chi(\lambda) [\lambda^{-\f52 }|x-y|^{\f12 }+\lambda^{-\f32 }|x-y|^{\f32 }]\les   \lambda^{-\f52 } \la x \ra^{\f32 } \la y\ra^{\f32 }.
 \end{align*}
Applying Lemma~\ref{lem:ibp} with these bounds we obtain
\begin{multline*}
|B|\les \f1t\int_0^{t^{-1/2}}|\mathcal E^\prime(\lambda)| d\lambda+ \Big|\frac{\mathcal{E}^\prime(t^{-1/2})}{t^{3/2}}\Big|
+\f1{t^2}\int_{t^{-1/2}}^\infty \Big|\Big(\frac{\mathcal E^\prime(\lambda)}{\lambda}\Big)^\prime\Big| d\lambda\\
\les \f{\sqrt{ \la x \ra \la y\ra}}{t}\int_0^{t^{-1/2}} \lambda^{-\f12} d\lambda +\f{ \sqrt{ \la x \ra \la y\ra} }{t^{\f54}}
+\f{\la x \ra^{\f32 } \la y\ra^{\f32 }}{t^2}\int_{t^{-1/2}}^\infty  \lambda^{-\f52 }  d\lambda
\les\f{\la x \ra^{\f32 } \la y\ra^{\f32 }}{t^{\f54}}.
\end{multline*}

\end{proof}

We now consider the contribution of the second term in \eqref{resolvent id} to \eqref{stone2}:
\be\label{stone2_2}
 \int_{\R^4}\int_0^\infty e^{it\lambda^2}\lambda \chi(\lambda)  [\mathcal R^- -\mathcal R^+] v(x_1)S(x_1,y_1)v(y_1)
d\lambda dx_1 dy_1,
\ee
where
\be\label{calR}
\mathcal R^\pm=\f{R_0^{\pm}(\lambda^2)(x,x_1) R_0^{\pm}(\lambda^2)(y_1,y)}{h_\pm(\lambda)}.
\ee
\begin{prop}\label{prop:stone2_2} Let $0<\alpha<1/4$. If $v(x)\les \la x\ra^{-\f32-\alpha-}$, then
we have
$$
\eqref{stone2_2}=\frac{1}{4t} + O\Big( \f{\sqrt{w(x)w(y)}  }{t\log^2(t)} \Big)+O\Big( \f{\la x\ra^{\f12+\alpha+} \la y\ra^{\f12+\alpha+}}{t^{1+\alpha} } \Big).
$$
\end{prop}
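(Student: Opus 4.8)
The plan is to substitute the free-resolvent expansion of Lemma~\ref{R0 exp cor} into each factor of $\mathcal R^\pm$ and isolate the piece responsible for the $\f1{4t}$, which must cancel the $-\f1{4t}$ produced by the free evolution in Proposition~\ref{freeevol}. Writing $g^\pm(\lambda)=h_\pm(\lambda)-c$, Lemma~\ref{R0 exp cor} becomes $R_0^\pm(\lambda^2)(x,x_1)=\f{h_\pm(\lambda)}{\|V\|_1}+\widetilde G_0(x,x_1)+E_0^\pm(\lambda)(x,x_1)$ with $\widetilde G_0:=G_0-c/\|V\|_1$ real and $\lambda$-independent. Multiplying the two such factors and dividing by $h_\pm$, the product $\mathcal R^\pm$ splits into a term $\f{h_\pm}{\|V\|_1^2}$, linear terms $\f1{\|V\|_1}[\widetilde G_0+E_0^\pm]$, and a quadratic remainder $\f1{h_\pm}[\widetilde G_0+E_0^\pm][\widetilde G_0+E_0^\pm]$. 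In the difference $\mathcal R^--\mathcal R^+$ the real $\lambda$-independent $\widetilde G_0$ contributions cancel, and since $h_--h_+=g^--g^+=-\f{i\|V\|_1}2$, the first term contributes the $\lambda$-independent constant $-\f{i}{2\|V\|_1}$. Pairing this against $v(x_1)S(x_1,y_1)v(y_1)$ and using $Qv=0$ with the block form \eqref{S_defn}, only the $P$-$P$ entry of $S$ survives, giving $\langle v,Sv\rangle=\|v\|_2^2=\|V\|_1$. Thus, with $\mathcal E(\lambda):=\chi(\lambda)\int[\mathcal R^--\mathcal R^+]\,vSv$, the $E_0^\pm$ terms ($O(\lambda^{1/2})$) and the quadratic remainder (carrying $h_\pm^{-1}=O(1/|\log\lambda|)$) vanish as $\lambda\to0$, so $\mathcal E(0)=-\f i2$ and the leading term $\f{i\mathcal E(0)}{2t}$ of Lemma~\ref{lem:ibp} is exactly $\f1{4t}$.

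It then remains to bound the error terms of Lemma~\ref{lem:ibp}, split into two groups. The first consists of the pure logarithmic piece $\widetilde G_0(x,x_1)\widetilde G_0(y_1,y)\cdot\f{i\|V\|_1/2}{h_-h_+}$ (using $h_+-h_-=\f{i\|V\|_1}2$). Since $\int|\log|x-x_1||\,v(x_1)\,dx_1\les\log(2+|x|)$, pairing against $vSv$ produces the weight $\log(2+|x|)\log(2+|y|)=\sqrt{w(x)w(y)}$, while $|h_\pm|\gtrsim|\log\lambda|$ and $\partial_\lambda h_\pm=-\f{\|V\|_1}{2\pi\lambda}$ give $|\partial_\lambda(h_-h_+)^{-1}|\les\f1{\lambda|\log\lambda|^3}$. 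Feeding this into the first error term of Lemma~\ref{lem:ibp} and using $\int_0^{t^{-1/2}}\f{d\lambda}{\lambda|\log\lambda|^3}\les\f1{\log^2 t}$ yields $\f{\sqrt{w(x)w(y)}}{t\log^2(t)}$; the boundary and second-derivative terms of Lemma~\ref{lem:ibp} are of the same size or smaller. This is the step where the two powers of $1/\log t$ arise, one from each resolvent factor through $h_-^{-1}h_+^{-1}$.

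The second group collects every term carrying at least one factor $E_0^\pm$ (the linear $E_0$ terms and the $\widetilde G_0E_0$, $E_0E_0$ parts of the remainder). For these I would integrate by parts once in $\lambda$, the boundary term at $\lambda=0$ vanishing since $E_0^\pm(0)=0$, and then estimate the resulting oscillatory integral by exploiting the $\alpha$-Hölder continuity of $\partial_\lambda E_0^\pm$ from Corollary~\ref{lipbound} to gain an extra factor $t^{-\alpha}$. The spatial weight is extracted by Cauchy--Schwarz: under $v\les\la x\ra^{-\f32-\alpha-}$ one has $\big\||x-\cdot|^{\f12+\alpha}v\big\|_2^2=\int|x-x_1|^{1+2\alpha}|V(x_1)|\,dx_1\les\la x\ra^{1+2\alpha+}$, so each $E_0^\pm$ factor contributes $\la x\ra^{\f12+\alpha+}$ (resp. $\la y\ra^{\f12+\alpha+}$), the boundedness of $S$ and of $QD_0Q$ on $L^2$ absorbing the remaining integrations. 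This produces $\f{\la x\ra^{\f12+\alpha+}\la y\ra^{\f12+\alpha+}}{t^{1+\alpha}}$; the mixed $\widetilde G_0E_0$ terms, having one extra $\log$ factor absorbed into $\la\cdot\ra^{0+}$, are dominated by the same bound.

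The main obstacle is the organized bookkeeping of the first and second $\lambda$-derivatives of $\mathcal R^--\mathcal R^+$: each derivative of the logarithmic factors $g^\pm,h_\pm$ produces a singular $\lambda^{-1}$, and one must check that in every surviving term this is compensated by a matching $|\log\lambda|^{-1}$ from $h_-^{-1}h_+^{-1}$ (first group) or by the $\lambda^{1/2}$-vanishing of $E_0^\pm$ (second group), so that the $\lambda$-integrals of Lemma~\ref{lem:ibp} converge at the advertised rates. The delicate quantitative point is balancing the spatial growth $|x-x_1|^{\f12+\alpha}$ against the time gain $t^{-\alpha}$ in the $E_0$ group, which is exactly what the Hölder bound of Corollary~\ref{lipbound} accomplishes, the cruder second-derivative bound $\lambda^{-1/2}|x-y|^{3/2}$ being wasteful in the spatial weight.
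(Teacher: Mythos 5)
Your proposal is correct and follows essentially the same route as the paper: the same expansion of $\mathcal R^\pm$ into a constant piece (yielding $\tfrac1{4t}$ after using $Qv=0$ and $\langle v,Pv\rangle=\|V\|_1$), the pure $\widetilde G_0\widetilde G_0/(h_-h_+)$ piece handled by Lemma~\ref{lem:ibp} with $\int_0^{t^{-1/2}}\lambda^{-1}|\log\lambda|^{-3}\,d\lambda\les\log^{-2}t$, and the $E_0$-carrying remainder handled by one integration by parts plus the H\"older bound of Corollary~\ref{lipbound} (the content of Lemma~\ref{lem:ibp2}), with the spatial weights extracted exactly as in Lemmas~\ref{lem:Sll2} and~\ref{lem:Sll3}. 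The only differences are cosmetic (normalization of $\widetilde G_0$ and bundling the three pieces into a single application of Lemma~\ref{lem:ibp} rather than the paper's split into \eqref{Sll1}--\eqref{Sll3}).
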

\begin{proof}
Recall from  Lemma~\ref{R0 exp cor}  that
	\begin{align*}
		R_0^{\pm}(\lambda^2)(x,x_1)=\frac{1}{\|V\|_1} g^{\pm}(\lambda)+G_0(x,x_1)+E_0^\pm(\lambda)(x,x_1).
	\end{align*}
Also recall that $h^\pm(\lambda)=g^\pm(\lambda)+c$ with $c\in \R$. Therefore
$$
\mathcal R^\pm=\frac{1}{\|V\|_1^2}\big[g^\pm(\lambda)+c+\widetilde G_0(x,x_1)+\widetilde G_0(y,y_1) +\frac{\widetilde G_0(x,x_1) \widetilde G_0(y,y_1)}{g^\pm(\lambda)+c}\big] +E_2^\pm(\lambda),
$$
where
\begin{multline}\label{E2def}
E_2^\pm(\lambda):=\f1{\|V\|_1}\Big(1+\f{\widetilde G_0(x,x_1)}{g^\pm(\lambda)+c}\Big) E_0^\pm(\lambda)(y,y_1) +
\f1{\|V\|_1}\Big(1+\f{\widetilde G_0(y,y_1)}{g^\pm(\lambda)+c}\Big) E_0^\pm(\lambda)(x,x_1)\\+\f{E_0^\pm(\lambda)(x,x_1)E_0^\pm(\lambda)(y,y_1)}{g^\pm(\lambda)+c},
\end{multline}
and $\widetilde G_0=\|V\|_1G_0-c$. Using this and  \eqref{g form}, we have
$$
\mathcal R^--\mathcal R^+=-\frac{i}{2\|V\|_1}+c_3\frac{\widetilde G_0(x,x_1) \widetilde G_0(y,y_1)}{(\log(\lambda)+c_1)^2+c_2^2} +E_2^-(\lambda)-E^+_2(\lambda),
$$
where  $c_1,c_2,c_3\in\R$.

Accordingly we rewrite the $\lambda$--integral in \eqref{stone2_2} as a sum of the following
\begin{align}\label{Sll1}
&-\frac{i}{2\|V\|_1}  \int_0^\infty e^{it\lambda^2} \lambda \chi(\lambda) d\lambda,
\\&\label{Sll2}
\int_0^\infty e^{it\lambda^2} \lambda \chi(\lambda) \frac{\widetilde G_0(x,x_1) \widetilde G_0(y,y_1)}{(\log(\lambda)+c_1)^2+c_2^2}   d\lambda,
\\&\label{Sll3}
\int_0^\infty e^{it\lambda^2} \lambda \chi(\lambda) [E_2^-(\lambda)-E^+_2(\lambda)] d\lambda.
\end{align}
Note that by \eqref{Aest} we have
\be\label{lem:Sll1}
\eqref{Sll1}=\frac{1}{4t\|V\|_1}+O(t^{-2}).
\ee
The leading term above will cancel the boundary term that arose in Proposition~\ref{freeevol}.

The  decay rate $\f{1}{t\log^2(t)}$ appears because of the following lemma, which seems to be optimal.
Define 
$$k(x,x_1):=1+\log^-(|x-x_1|)+\log^+(|x_1|),$$ 
where $\log^-(x)=|\log(x)|\chi_{(0,1)}(x)$ and $\log^+(x)= \log(x) \chi_{(1,\infty)}(x)$.
\begin{lemma}\label{lem:Sll2} For $t>2$, we have the bound
$$
|\eqref{Sll2}| \les \frac{1}{t\log^2(t)} k(x,x_1)  k(y,y_1)  \sqrt{w(x)  w(y) }.
$$ 
\end{lemma}
\begin{lemma}\label{lem:Sll3} Let $0<\alpha<1/4$. For $t>2$, we have the bound
$$
|\eqref{Sll3}|\les t^{-1-\alpha} k(x,x_1)  k(y,y_1)  \big(\la x\ra \la y\ra \la x_1\ra \la y_1\ra\big)^{\f12+\alpha+}.
$$
\end{lemma}
We will prove  Lemma~\ref{lem:Sll2} and Lemma~\ref{lem:Sll3} after we finish the proof of the proposition.

Using the bounds we obtained in \eqref{lem:Sll1}, Lemma~\ref{lem:Sll2},   Lemma~\ref{lem:Sll3} in \eqref{stone2_2}, we obtain
\begin{align*}
\eqref{stone2_2} = &\frac{1}{4t\|V\|_1} \int_{\R^4} v(x_1)S(x_1,y_1)v(y_1) dx_1 dy_1\\
&+O\Big( \f{\sqrt{w(x)w(y)} }{t\log^2(t)}   \int_{\R^4}  k(x,x_1) v(x_1)|S(x_1,y_1)| v(y_1)k(y,y_1)   dx_1 dy_1   \Big) \\
  &+ O\Big( \f{\big(\la x\ra \la y\ra\big)^{\f12+\alpha+}}{t^{1+\alpha} }    \int_{\R^4}
 k(x,x_1)  \la x_1\ra^{\f12+\alpha+}
 v(x_1)|S(x_1,y_1)| v(y_1) k(y,y_1)  ^{\f12+\alpha+}    dx_1 dy_1   \Big).
 \end{align*}
 Note that the integrals in the error terms are bounded in $x,y$, since 
$$\|v(y_1)\la y_1\ra^{\f12+\alpha+} k(y,y_1)\|_{L^2_{y_1}}\les 1.$$
 Also note that we can replace $S$ with $P$ in the first integral since the other parts of the operator $S$ contains $Q$ on at least one side and that $Qv=0$. Therefore,
\begin{multline*}
\eqref{stone2_2}= \frac{1}{4t\|V\|_1} \int_{\R^4} v(x_1)P(x_1,y_1)v(y_1) dx_1 dy_1+ O\Big( \f{\sqrt{w(x)w(y)}  }{t\log^2(t)} \Big)+ O\Big( \f{\big(\la x\ra \la y\ra\big)^{\f32+}}{t^{\f54} } \Big)\\ =\frac{1}{4t }+O\Big( \f{\sqrt{w(x)w(y)}  }{t\log^2(t)} \Big)+O\Big( \f{\big(\la x\ra \la y\ra\big)^{\f32+}}{t^{\f54} } \Big).
\end{multline*}
\end{proof}

\begin{proof}[Proof of Lemma~\ref{lem:Sll2}]
First note that
\be\label{G0bound}
|\widetilde G_0(x,x_1)|\les 1+|\log|x-x_1||\les k(x,x_1) \sqrt{w(x)}.
\ee
Second, we bound the $\lambda$-integral by using Lemma~\ref{lem:ibp} with $\mathcal E(\lambda)=\frac{\chi(\lambda)}{(\log(\lambda)+c_1)^2+c_2^2}$.
Note that
\begin{align*}
|\partial_\lambda \mathcal E(\lambda)|  \les   \frac{\chi(\lambda)}{\lambda |\log(\lambda)|^3}, \quad\quad \Big|\partial_\lambda\Big(\frac{\partial_\lambda \mathcal E (\lambda)}{\lambda}\Big)\Big|  \les  \frac{\chi(\lambda)}{\lambda^3 |\log(\lambda)|^3}.
 \end{align*}
Applying Lemma~\ref{lem:ibp} with these bounds we obtain
\begin{multline*}
\Big|\int_0^\infty e^{it\lambda^2} \lambda \,  \mathcal E(\lambda) \, d\lambda \Big| \les \f1t\int_0^{t^{-1/2}}|\mathcal E^\prime(\lambda)| d\lambda+ \Big|\frac{\mathcal{E}^\prime(t^{-1/2})}{t^{3/2}}\Big|
+\f1{t^2}\int_{t^{-1/2}}^\infty \Big|\Big(\frac{\mathcal E^\prime(\lambda)}{\lambda}\Big)^\prime\Big| d\lambda\\
\les \f{ 1}{t}\int_0^{t^{-1/2}} \frac{\chi(\lambda)}{\lambda |\log(\lambda)|^3} d\lambda +\f{1 }{t\log^3(t)}
+\f{1}{t^2}\int_{t^{-1/2}}^\infty\frac{\chi(\lambda)}{\lambda^3 |\log(\lambda)|^3}  d\lambda.
\end{multline*}
It is easy to calculate that
$$
 \f1t \int_0^{t^{-1/2}} \frac{1}{\lambda |\log(\lambda)|^3} d\lambda \sim \f1{t\log^2(t)}.
$$
It remains to bound the integral on $[t^{-1/2},\infty)$:
\begin{multline*}
\f{1}{t^2}\int_{t^{-1/2}}^\infty\frac{\chi(\lambda)}{\lambda^3 |\log(\lambda)|^3}  d\lambda \lesssim  \f1{t^2}+ \f1{t^2} \int_{t^{-1/2}}^{1/10}  \frac{1}{\lambda^3  |\log(\lambda)|^3 } d\lambda \\  \les \f1{t^2}+\f1{t^2} \int_{t^{-1/4}}^{1/10}  \frac{1}{\lambda^3  } d\lambda+\f1{t^2} \int_{t^{-1/2}}^{t^{-1/4}}  \frac{1}{\lambda^3  |\log(t)|^3 } d\lambda
\les \f1{t^{3/2}}+\f1{t|\log(t)|^3}.
\end{multline*}
The first inequality follows since the integral on $[\f1{10},\infty)$ converges.

Combining the bounds we obtained above finishes the proof of the lemma.
\end{proof}

Before we prove Lemma~\ref{lem:Sll3}, we discuss the following variant of Lemma~\ref{lem:ibp}:
\begin{lemma}\label{lem:ibp2} Assume that  $\mathcal E(0)=0$. For $t>2$, we have
\begin{multline}\label{ibp2}
\Big|\int_0^\infty e^{it\lambda^2} \lambda \, \mathcal E(\lambda) d\lambda  \Big| \les \f1t\int_0^{\infty}\frac{|\mathcal E^\prime(\sqrt s)|}{\sqrt s (1+st)} ds
+\f1{t}\int_{\f{\pi}{t}}^\infty \Big| \frac{\mathcal E^\prime(\sqrt{s+\f\pi{t}})-\mathcal E^\prime(\sqrt{s})}{\sqrt s} \Big| ds\\
\les \f1t\int_0^{\infty}\frac{|\mathcal E^\prime(\lambda)|}{  (1+\lambda^2 t)} d\lambda
+\f1{t}\int_{t^{-1/2}}^\infty \big|  \mathcal E^\prime(\lambda \sqrt{1+ \pi t^{-1}\lambda^{-2}} )-\mathcal E^\prime(\lambda)  \big| d\lambda.
\end{multline}
\end{lemma}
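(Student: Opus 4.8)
The plan is to extract the oscillation of $e^{it\lambda^2}$ by a single integration by parts followed by the periodicity identity $e^{i\pi}=-1$, the latter forcing a change of variables that linearizes the phase. First I would integrate by parts in $\lambda$ exactly as in Lemma~\ref{lem:ibp}, using $e^{it\lambda^2}\lambda=\partial_\lambda e^{it\lambda^2}/(2it)$. Since $\mathcal E(0)=0$ (and $\mathcal E$ carries a cutoff $\chi$ in every application, so the boundary term at infinity vanishes), this yields
$$
\int_0^\infty e^{it\lambda^2}\lambda\,\mathcal E(\lambda)\,d\lambda=\f{i}{2t}\int_0^\infty e^{it\lambda^2}\mathcal E^\prime(\lambda)\,d\lambda.
$$
Changing variables $s=\lambda^2$ turns the remaining integral into $\f12\int_0^\infty e^{its}G(s)\,ds$ with $G(s):=\mathcal E^\prime(\sqrt s)/\sqrt s$; the purpose of this substitution is that the phase $ts$ is now linear, so that a shift of $s$ produces a clean sign change in the exponential. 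Altogether $\int_0^\infty e^{it\lambda^2}\lambda\,\mathcal E(\lambda)\,d\lambda=\f{i}{4t}K$, where $K:=\int_0^\infty e^{its}G(s)\,ds$, and the prefactor $\f1{4t}$ is exactly the overall $\f1t$ in the claimed bound.

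Next I would exploit periodicity. Using $e^{it(s+\pi/t)}=-e^{its}$ together with the substitution $s\mapsto s+\pi/t$ gives $K=-\int_{\pi/t}^\infty e^{its}G(s-\pi/t)\,ds$. Averaging this with the original expression for $K$ and splitting $\int_0^\infty=\int_0^{\pi/t}+\int_{\pi/t}^\infty$ produces
$$
|K|\les\int_0^{\pi/t}|G(s)|\,ds+\int_{\pi/t}^\infty\big|G(s)-G(s-\pi/t)\big|\,ds.
$$
After the shift $s\mapsto s+\pi/t$ in the second integral this becomes $\int_0^\infty|G(s+\pi/t)-G(s)|\,ds$, and it remains to match the two pieces with $T_1:=\f1t\int_0^\infty\frac{|\mathcal E^\prime(\sqrt s)|}{\sqrt s(1+st)}\,ds$ and the second term $T_2$. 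On $[0,\pi/t]$ one has $1+st\sim1$, so $\f1{4t}\int_0^{\pi/t}|G|$ is controlled by $T_1$. For the difference I would decompose
$$
G(s+\pi/t)-G(s)=\f{\mathcal E^\prime(\sqrt{s+\pi/t})-\mathcal E^\prime(\sqrt s)}{\sqrt{s+\pi/t}}+\mathcal E^\prime(\sqrt s)\Big(\f1{\sqrt{s+\pi/t}}-\f1{\sqrt s}\Big).
$$
The first summand, bounded via $\sqrt{s+\pi/t}\ge\sqrt s$ and restricted to $[\pi/t,\infty)$, gives $T_2$; the second, bounded by the elementary estimate $\big|\f1{\sqrt{s+\pi/t}}-\f1{\sqrt s}\big|\les\f1{\sqrt s(1+st)}$ (which follows from $\sqrt{s+\pi/t}-\sqrt s=\f{\pi/t}{\sqrt{s+\pi/t}+\sqrt s}$ and $st+\pi\ge1+st$), is absorbed into $T_1$. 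The second displayed inequality of the lemma is then obtained by substituting $s=\lambda^2$ back into $T_1$ and $T_2$, the constant discrepancy between the resulting lower limit $\sqrt{\pi/t}$ and the stated $t^{-1/2}$ being harmless under $\les$.

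The step I expect to be the main obstacle is the bookkeeping near the endpoint $s\sim\pi/t$, equivalently $\lambda\sim t^{-1/2}$, where the weight $1/\sqrt s$ is singular. There the pointwise comparison of the $1/\sqrt{\cdot}$-difference with the weight $\f1{\sqrt s(1+st)}$ degenerates, so the region $[0,\pi/t]$ must be separated from $[\pi/t,\infty)$ and its contribution (together with the part of the first summand of the decomposition that lives on $[0,\pi/t]$) folded into $T_1$; these estimates are valid only after integrating, not pointwise. Everything else is routine once the phase has been linearized and the sign-flip identity applied.
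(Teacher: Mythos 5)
Your proposal is correct and follows essentially the same route as the paper: one integration by parts, the substitution $s=\lambda^2$ to linearize the phase, the half-period shift $s\mapsto s+\pi/t$ with $e^{i\pi}=-1$, and the same two-term decomposition of the difference of $\mathcal E^\prime(\sqrt{s})/\sqrt{s}$, with the $1/\sqrt{\cdot}$-difference absorbed into the first integral. The only (harmless) deviations are bookkeeping: the paper splits at $2\pi/t$ so the shifted difference integral starts at $\pi/t$, whereas you split at $\pi/t$ and fold the resulting extra piece on $[0,\pi/t]$ back into $T_1$, which works as you indicate.
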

\begin{proof}
As before we  integrate by parts once using the identity $e^{it\lambda^2}\lambda=\partial_\lambda e^{it\lambda^2}/(2it)$, and then let $s=\lambda^2$ to obtain
$$
 \int_0^\infty e^{it\lambda^2} \lambda \mathcal E(\lambda) d\lambda =  \f{i}{2t}  \int_0^\infty e^{it\lambda^2}  \mathcal E^\prime(\lambda) d\lambda
 =\f{i}{4t}\int_0^\infty e^{its}  \frac{\mathcal E^\prime(\sqrt{s})}{\sqrt s} ds=\int_0^{\f{2\pi}{t}}+\int_{\f{2\pi}{t}}^\infty.
$$
The contribution of the first integral is bounded by the first integral on the right hand side of  \eqref{ibp2}.
We rewrite the second integral as
$$
\int_{\f{2\pi}{t}}^\infty e^{its}  \frac{\mathcal E^\prime(\sqrt{s})}{\sqrt s} ds = - \int_{\f{2\pi}{t}}^\infty e^{it(s-\f\pi{t})}  \frac{\mathcal E^\prime(\sqrt{s})}{\sqrt s} ds =  - \int_{\f{\pi}{t}}^\infty e^{its}  \frac{\mathcal E^\prime(\sqrt{s+\f\pi{t}})}{\sqrt{s+\f\pi{t}}} ds.
$$
Therefore it suffices to consider (the integral on $[\pi/t,2\pi/t]$ is bounded by the first integral on the right hand side of \eqref{ibp2})
$$
\int_{\f{\pi}{t}}^\infty e^{its}  \Big(\frac{\mathcal E^\prime(\sqrt{s})}{\sqrt s} - \frac{\mathcal E^\prime(\sqrt{s+\f\pi{t}})}{\sqrt{s+\f\pi{t}}}\Big) ds.
$$
The claim follows from
\begin{multline*}
\Big|\frac{\mathcal E^\prime(\sqrt{s})}{\sqrt s} - \frac{\mathcal E^\prime(\sqrt{s+\f\pi{t}})}{\sqrt{s+\f\pi{t}}}\Big| \les \f{|\mathcal E^\prime(\sqrt{s})-\mathcal E^\prime(\sqrt{s+\f\pi{t}})|}{\sqrt{s+\f\pi{t}}}+|\mathcal E^\prime(\sqrt{s})|\Big|\frac{1}{\sqrt s}-\f1{\sqrt{s+\f\pi{t}}}\Big|\\
\les  \f{|\mathcal E^\prime(\sqrt{s})-\mathcal E^\prime(\sqrt{s+\f\pi{t}})|}{\sqrt{s}}+  \frac{|\mathcal E^\prime(\sqrt{s})|}{ts^{\f32}}.
\end{multline*}
\end{proof}

\begin{proof}[Proof of Lemma~\ref{lem:Sll3}]
We will only consider the following part of \eqref{Sll3}:
\be\label{gecic}
\int_0^\infty e^{it\lambda^2} \lambda \chi(\lambda) \Big(1+\f{\widetilde G_0(x,x_1)}{g(\lambda)+c}\Big) E_0(\lambda)(y,y_1) d\lambda=: \int_0^\infty e^{it\lambda^2} \lambda \, \mathcal E(\lambda) \,d\lambda.
\ee
The other parts are either of this   form or
much smaller. We also omit the $\pm$ signs since we can not rely on a cancellation between '+' and '-' terms.

Using Lemma~\ref{R0 exp cor}, Corollary~\ref{lipbound},  and \eqref{G0bound},  we estimate (for $0<\lambda<b\les \lambda<\lambda_1$)
$$
 |\partial_\lambda\mathcal E(\lambda)|\les k(x,x_1) \sqrt{w(x)} \chi(\lambda) \lambda^{-\f12 } \la y-y_1\ra^{\f12 }\les  k(x,x_1) \sqrt{w(x) \la y\ra \la y_1\ra }\lambda^{-\f12 },
$$
\begin{multline*}
\big| \partial_\lambda \mathcal E (b)-\partial_\lambda \mathcal E(\lambda) \big|\les \chi(\lambda) k(x,x_1) \sqrt{w(x)} \lambda^{-\f12-\alpha} (b-\lambda)^\alpha \la y-y_1\ra^{\f12+\alpha } \\
\les \chi(\lambda) k(x,x_1) \sqrt{w(x)}   (\la y\ra \la y_1\ra)^{\f12+\alpha }  \lambda^{-\f12-\alpha} (b-\lambda)^\alpha.
\end{multline*}

Noting that $\mathcal E(0)=0$ we can use Lemma~\ref{lem:ibp2} to obtain
$$
|\eqref{gecic}|\les  \f1t\int_0^{\infty}\frac{|\mathcal E^\prime(\lambda)|}{  (1+\lambda^2 t)} d\lambda
+\f1{t}\int_{t^{-1/2}}^\infty \big|  \mathcal E^\prime(\lambda \sqrt{1+ \pi t^{-1}\lambda^{-2}} )-\mathcal E^\prime(\lambda)  \big| d\lambda.$$
Using the bounds above, we estimate the first integral by
$$
 \f{k(x,x_1) \sqrt{w(x) \la y\ra \la y_1\ra } }{t}\int_0^{\infty}\frac{1}{ \sqrt{\lambda} (1+\lambda^2 t)} d\lambda\les  \f{k(x,x_1) \sqrt{w(x) \la y\ra \la y_1\ra } }{t^{5/4}}.
$$
To estimate the second integral, we apply the Lipschitz bound with
$$b-\lambda=\lambda \big(\sqrt{1+ \pi t^{-1}\lambda^{-2}} -1\big)\sim \f1{t\lambda},$$
and get
$$
\f{ k(x,x_1) \sqrt{w(x)}   (\la y\ra \la y_1\ra)^{\f12+\alpha }  }{t}\int_{t^{-1/2}}^{\lambda_1}  \lambda^{-\f12-\alpha} (t\lambda)^{-\alpha} d\lambda
\les \f{ k(x,x_1) \sqrt{w(x)}   (\la y\ra \la y_1\ra)^{\f12+\alpha }  }{t^{1+\alpha}},
$$
since $\alpha \in (0,1/4)$.

Taking into account the contribution of the term with the roles of $x$ and $y$ switched, we obtain the assertion of the lemma.
\end{proof}

Next we consider the contribution of the third term in \eqref{resolvent id} to \eqref{stone2}:
\be\label{stone2_3}
 \int_{\R^4}\int_0^\infty e^{it\lambda^2}\lambda \chi(\lambda)  [\mathcal R^-_2 -\mathcal R_2^+] v(x_1)[QD_0Q](x_1,y_1)v(y_1)
d\lambda dx_1 dy_1,
\ee
where
\be\label{calR2}
\mathcal R^\pm_2= R_0^{\pm}(\lambda^2)(x,x_1) R_0^{\pm}(\lambda^2)(y_1,y).
\ee
Recall from  Lemma~\ref{R0 exp cor}  that
	\begin{align*}
		R_0^{\pm}(\lambda^2)(x,x_1)=c[a\log(\lambda|x-x_1|)+b\pm  i]+ E_0^\pm(\lambda)(x,x_1),
	\end{align*}
where $a,b,c\in\R$.  Therefore
\begin{multline*}
\mathcal R_2^\pm=c^2\big[(a\log(\lambda|x-x_1|)+b)(a\log(\lambda|y-y_1|)+b)-1 \big]\\  \pm i c^2 \big[ a\log(\lambda|x-x_1|)+a\log(\lambda|y-y_1|)+2b  \big] +E_3^\pm(\lambda),
\end{multline*}
where
\begin{multline}\label{E3def}
E_3^\pm(\lambda):= c[a\log(\lambda|x-x_1|)+b\pm  i] E_0^\pm(\lambda)(y,y_1)\\ +
c[a\log(\lambda|y-y_1|)+b\pm  i] E_0^\pm(\lambda)(x,x_1)+ E_0^\pm(\lambda)(x,x_1)E_0^\pm(\lambda)(y,y_1).
\end{multline}
Using this, we have
$$
\mathcal R_2^--\mathcal R_2^+=-2c^2(a\log(\lambda|x-x_1|)+a\log(\lambda|y-y_1|)+2b)+E_3^-(\lambda)-E^+_3(\lambda).
$$
Using this in \eqref{stone2_3}, and noting that the contribution of the first summand vanishes since $Qv=0$, we obtain
\be\label{stone2_3_2}
\eqref{stone2_3} = \int_{\R^4}\int_0^\infty e^{it\lambda^2}\lambda \chi(\lambda)  [E_3^-(\lambda)-E^+_3(\lambda)] v(x_1)[QD_0Q](x_1,y_1)v(y_1)
d\lambda dx_1 dy_1.
\ee

\begin{prop}\label{prop:stone2_3} Let $0<\alpha<1/4$. If $v(x)\les \la x\ra^{-\f32-\alpha-}$, then
we have
$$
\eqref{stone2_3}=O\Big(\f{\la x\ra^{\f12+\alpha+} \la y \ra^{\f12+\alpha+} }{t^{1+\alpha}}\Big).
$$
\end{prop}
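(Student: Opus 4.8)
The plan is to mirror the proof of Lemma~\ref{lem:Sll3} as closely as possible, since \eqref{stone2_3_2} has exactly the structure treated there. First I would use \eqref{E3def} to split the integrand $E_3^-(\lambda)-E_3^+(\lambda)$ into three groups: the two ``logarithm times a single $E_0$'' pieces, a representative being
$$
\int_0^\infty e^{it\lambda^2}\lambda \chi(\lambda)\,[a\log(\lambda|x-x_1|)+b\pm i]\,E_0^\pm(\lambda)(y,y_1)\,d\lambda,
$$
and the ``product'' piece
$$
\int_0^\infty e^{it\lambda^2}\lambda \chi(\lambda)\,E_0^\pm(\lambda)(x,x_1)E_0^\pm(\lambda)(y,y_1)\,d\lambda .
$$
By the symmetry $(x,x_1)\leftrightarrow(y,y_1)$ it suffices to treat one logarithm piece and the product piece, and as in Lemma~\ref{lem:Sll3} I would drop the $\pm$ signs since no $+/-$ cancellation is available. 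Setting $\mathcal E(\lambda)$ equal to the $\lambda$-dependent factor in each case, one checks $\mathcal E(0)=0$: indeed $E_0^\pm(\lambda)=O(\lambda^{\f12})$ by Lemma~\ref{R0 exp cor} while $\lambda^{\f12}\log\lambda\to 0$, so the logarithmic multiplier is harmless at the origin. This licenses Lemma~\ref{lem:ibp2}.

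Next I would record the derivative and Lipschitz bounds. For the logarithm piece, writing $\partial_\lambda\mathcal E=\chi(\lambda)\{\f a\lambda E_0(y,y_1)+[a\log(\lambda|x-x_1|)+b\pm i]\partial_\lambda E_0(y,y_1)\}$ (plus a harmless $\chi'$ term supported near $\lambda_1$) and using $1+|\log(\lambda|x-x_1|)|\les(1+|\log\lambda|)k(x,x_1)\sqrt{w(x)}$ via \eqref{G0bound} together with $|E_0^\pm|\les\lambda^{\f12}|y-y_1|^{\f12}$ and $|\partial_\lambda E_0^\pm|\les\lambda^{-\f12}|y-y_1|^{\f12}$, I expect
$$
|\partial_\lambda\mathcal E(\lambda)|\les \chi(\lambda)(1+|\log\lambda|)\,k(x,x_1)\sqrt{w(x)}\,\la y-y_1\ra^{\f12}\lambda^{-\f12},
$$
the $\lambda^{-\f12}$ coming both from $\partial_\lambda E_0$ and from the $\f a\lambda E_0$ term. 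Telescoping the difference and invoking Corollary~\ref{lipbound} then yields the Lipschitz bound
$$
|\partial_\lambda\mathcal E(b)-\partial_\lambda\mathcal E(\lambda)|\les \chi(\lambda)(1+|\log\lambda|)\,k(x,x_1)\sqrt{w(x)}\,(\la y\ra\la y_1\ra)^{\f12+\alpha}\lambda^{-\f12-\alpha}(b-\lambda)^\alpha .
$$
For the product piece both estimates are milder, since $E_0$ itself is $O(\lambda^{\f12})$: the derivative is bounded by $\chi(\lambda)|x-x_1|^{\f12}|y-y_1|^{\f12}$ with no negative power of $\lambda$, and the Lipschitz bound follows by the same telescoping.

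With these in hand I would apply Lemma~\ref{lem:ibp2}. The first integral $\f1t\int_0^\infty|\mathcal E'(\lambda)|(1+\lambda^2 t)^{-1}d\lambda$ produces, for the logarithm piece, a factor $t^{-\f14}\log t$ after the rescaling $\lambda\mapsto\lambda\sqrt t$, so the total is $\les t^{-\f54}\log t\les t^{-1-\alpha}$ for $\alpha<\f14$; the product piece is even better. For the Lipschitz integral, using $b-\lambda\sim(t\lambda)^{-1}$ exactly as in Lemma~\ref{lem:Sll3}, I would bound $\f1t\int_{t^{-1/2}}^{\lambda_1}(1+|\log\lambda|)\lambda^{-\f12-\alpha}(t\lambda)^{-\alpha}d\lambda\les t^{-1-\alpha}$, which is valid precisely because $\alpha<\f14$ keeps $\lambda^{-\f12-2\alpha}$ integrable and lets the logarithm be absorbed into a $\lambda^{0-}$. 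This shows the $\lambda$-integral is $O(t^{-1-\alpha})$ times $k(x,x_1)\sqrt{w(x)}(\la y\ra\la y_1\ra)^{\f12+\alpha}$ for the logarithm piece and $O(t^{-1-\alpha})$ times $(\la x\ra\la y\ra\la x_1\ra\la y_1\ra)^{\f12+\alpha}$ for the product piece.

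Finally I would insert these into \eqref{stone2_3_2} and integrate against $v(x_1)|[QD_0Q](x_1,y_1)|v(y_1)$. Since $QD_0Q$ is absolutely bounded and the auxiliary weights satisfy $\|v(x_1)\la x_1\ra^{\f12+\alpha+}k(x,x_1)\|_{L^2_{x_1}}\les1$ (as used in Proposition~\ref{prop:stone2_2}), the $x_1,y_1$ integrals converge and leave behind $\la x\ra^{\f12+\alpha+}\la y\ra^{\f12+\alpha+}$: the product piece supplies the genuine polynomial weight through $|x-x_1|^{\f12+\alpha}\les(\la x\ra\la x_1\ra)^{\f12+\alpha}$, while the logarithm piece is dominated since $\sqrt{w(x)}=\log(2+|x|)\les\la x\ra^{0+}$. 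I expect the main obstacle to be the bookkeeping in the Lipschitz estimate for the logarithm piece, in particular controlling the difference coming from the $\f a\lambda E_0$ term while simultaneously tracking the $|\log\lambda|$ growth; but the comfortable gap between the baseline rate $t^{-\f54}$ and the target $t^{-1-\alpha}$ (available because $\alpha<\f14$) absorbs all logarithmic losses, so no genuinely new difficulty arises beyond what was already handled in Lemma~\ref{lem:Sll3}.
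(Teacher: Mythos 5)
Your proposal is correct and follows essentially the same route as the paper: both reduce the $\lambda$-integral to Lemma~\ref{lem:ibp2} using the derivative and Lipschitz bounds from Lemma~\ref{R0 exp cor}, Corollary~\ref{lipbound} and \eqref{G0bound}, obtain the $t^{-1-\alpha}$ rate (the paper bounds all three pieces of $E_3$ at once by $\chi(\lambda)\lambda^{-\f12-\alpha-}(b-\lambda)^\alpha(\la x\ra\la x_1\ra\la y\ra\la y_1\ra)^{\f12+\alpha+}k(x,x_1)k(y,y_1)$ rather than treating them separately), and then close the $x_1,y_1$ integration via the absolute boundedness of $QD_0Q$ and $\|v(x_1)k(x,x_1)\la x_1\ra^{\f12+\alpha+}\|_{L^2_{x_1}}\les 1$.
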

\begin{proof}
Let $\mathcal E(\lambda)= \chi(\lambda) E_3(\lambda)$ (we dropped the '$\pm$' signs).
Using
$$
|\log|x-x_1||\les k(x,x_1) \sqrt{w(x)},
$$
and the bounds in Lemma~\ref{R0 exp cor} and Corollary~\ref{lipbound} we estimate
(for $0<\lambda<b\les \lambda<\lambda_1$)
$$
|\partial_\lambda\mathcal E(\lambda)| \les \chi(\lambda) \lambda^{-\f12-} (\la y \ra\la x\ra \la y_1\ra \la x_1\ra)^{\f12+}  k(x,x_1)k(y,y_1),
$$
$$
\big| \partial_\lambda \mathcal E (b)-\partial_\lambda \mathcal E(\lambda) \big|\les
 \chi(\lambda) k(x,x_1) k(y,y_1)   (\la x\ra\la x_1\ra\la y\ra \la y_1\ra)^{\f12+\alpha+ }  \lambda^{-\f12-\alpha-} (b-\lambda)^\alpha.
$$
Applying Lemma~\ref{lem:ibp2} together with these bounds as in the proof of the previous lemma, we bound the    $\lambda$-integral by
$$
  k(x,x_1) k(y,y_1)   (\la x\ra\la x_1\ra\la y\ra \la y_1\ra)^{\f12+\alpha+ }\f{1}{t^{1+\alpha}}.
$$
Therefore,
\begin{multline*}
\eqref{stone2_3}
\les t^{-1-\alpha} \int_{\R^4} k(x,x_1)k(y,y_1) (\la y \ra \la y_1\ra \la x\ra \la x_1\ra )^{\f12+\alpha+} v(x_1)|QD_0Q(x_1,y_1)|v(y_1) dx_1dy_1
\\ \les \f{\la x\ra^{\f12+\alpha+} \la y \ra^{\f12+\alpha+}}{t^{1+\alpha}},
\end{multline*}
since  $\|v(x_1)k(x,x_1) \la x_1\ra^{\f12+\alpha+}\|_{L^2_{x_1}}\les 1$.
\end{proof}

We now turn to the contribution of the error term $E^{\pm}(\lambda)$ from Lemma~\ref{Minverse}
in \eqref{resolvent id}. Dropping the '$\pm$' signs, we need to consider
\be\label{stone2_er}
    		  \int_{\R^4}  \int_0^\infty e^{it\lambda^2} \lambda \, \mathcal E (\lambda)  v(x_1) v(y_1)  \, d\lambda
		  \, dx_1\, dy_1,
  	\ee
where
$$
\mathcal E (\lambda):= \chi(\lambda)
		R_0(\lambda^2)(x,x_1)E(\lambda)(x_1,y_1) R_0(\lambda^2)(y,y_1).
$$
\begin{prop} \label{prop:stone2er} Let $0<\alpha<1/4$. If $v(x)\les \la x\ra^{-\f32-\alpha-}$, then
we have
  	$$
    	\eqref{stone2_er}= O\Big(\f{\la x\ra^{\f12+\alpha+} \la y \ra^{\f12+\alpha+}}{t^{1+\alpha}}\Big).
  	$$
\end{prop}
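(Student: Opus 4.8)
The plan is to handle the ``$+$'' and ``$-$'' pieces separately (no cancellation between them is available) and to reduce, for each fixed $(x_1,y_1)$, to the scalar oscillatory integral estimate of Lemma~\ref{lem:ibp2}, in the same spirit as Proposition~\ref{prop:stone2_3}. The genuinely new difficulty is that $E(\lambda)$ is controlled only in Hilbert--Schmidt norm by Lemma~\ref{Minverse}, so unlike the absolutely bounded $QD_0Q$ of Proposition~\ref{prop:stone2_3} we cannot integrate $|E(\lambda)(x_1,y_1)|$ directly against $v(x_1)v(y_1)$. The device that resolves this is that the suprema over $\lambda$ sit \emph{inside} the Hilbert--Schmidt norms in Lemma~\ref{Minverse}, which yields pointwise envelope bounds
\begin{align*}
|E(\lambda)(x_1,y_1)| &\les \lambda^{\f12} K_0(x_1,y_1), \qquad |\partial_\lambda E(\lambda)(x_1,y_1)| \les \lambda^{-\f12} K_1(x_1,y_1),\\
|\partial_\lambda E(b)(x_1,y_1)-\partial_\lambda E(\lambda)(x_1,y_1)| &\les \lambda^{-\f12-\alpha}(b-\lambda)^\alpha K_2(x_1,y_1),
\end{align*}
with $\|K_j\|_{L^2(\R^4)}\les 1$. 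These let me treat $E(\lambda)(x_1,y_1)$ as a scalar function of $\lambda$ and carry out the $\lambda$-integration first.

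First I would verify the hypothesis $\mathcal{E}(0)=0$ of Lemma~\ref{lem:ibp2}: the two free-resolvent factors grow only logarithmically as $\lambda\to0$ while $|E(\lambda)(x_1,y_1)|\les\lambda^{\f12}K_0$, so $\mathcal{E}(\lambda)=O(\lambda^{\f12}(\log\lambda)^2)\to0$. Next I would expand each resolvent via $R_0^\pm(\lambda^2)=g^\pm(\lambda)/\|V\|_1+G_0+E_0^\pm(\lambda)$ of Lemma~\ref{R0 exp cor}, so that $\partial_\lambda R_0^\pm=(g^\pm)'/\|V\|_1+\partial_\lambda E_0^\pm$ with $|(g^\pm)'(\lambda)|\les\lambda^{-1}$, and use $|R_0^\pm(\lambda^2)(x,x_1)|\les 1+|\log\lambda|+k(x,x_1)\sqrt{w(x)}+\lambda^{\f12}(\la x\ra\la x_1\ra)^{\f12}$ together with \eqref{G0bound} and the $E_0$-bounds of Lemma~\ref{R0 exp cor} and Corollary~\ref{lipbound}. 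Applying the product rule to $\mathcal{E}=\chi\,R_0^{(x)}E\,R_0^{(y)}$ and counting powers of $\lambda$ (the worst cases being $\partial_\lambda$ on a resolvent, costing $\lambda^{-1}$ times $E\sim\lambda^{\f12}$, or $\partial_\lambda$ on $E$, costing $\lambda^{-\f12}$ times two logarithmic resolvents), the logarithms are absorbed into $\lambda^{0-}$ and I expect a pointwise bound
$$
|\partial_\lambda \mathcal{E}(\lambda)|\les \chi(\lambda)\,\lambda^{-\f12-}(\la x\ra\la y\ra)^{\f12+}k(x,x_1)k(y,y_1)(\la x_1\ra\la y_1\ra)^{\f12+}\big(K_0+K_1\big)(x_1,y_1),
$$
while splitting $\partial_\lambda\mathcal{E}(b)-\partial_\lambda\mathcal{E}(\lambda)$ across the three factors and invoking the Lipschitz bounds of Corollary~\ref{lipbound} and of $E$ above gives the analogous difference bound with the extra factor $\lambda^{-\alpha-}(b-\lambda)^\alpha$, weights raised to $\f12+\alpha+$, and $K_0+K_1+K_2$.

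Feeding these into Lemma~\ref{lem:ibp2} as in Proposition~\ref{prop:stone2_3}, the first integral contributes $O(t^{-\f54+})$ since $\f1t\int_0^\infty\lambda^{-\f12-}(1+\lambda^2t)^{-1}d\lambda\les t^{-\f54+}$, whereas the Lipschitz integral, with $b-\lambda\sim(t\lambda)^{-1}$, produces the bottleneck rate
$$
\f1t\int_{t^{-1/2}}^{\lambda_1}\lambda^{-\f12-\alpha-}(t\lambda)^{-\alpha}\,d\lambda = t^{-1-\alpha}\int_{t^{-1/2}}^{\lambda_1}\lambda^{-\f12-2\alpha-}\,d\lambda\les t^{-1-\alpha},
$$
where convergence at the origin of the last integral forces $\alpha<\f14$. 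This yields a pointwise-in-$(x_1,y_1)$ bound on the $\lambda$-integral of the form $t^{-1-\alpha}(\la x\ra\la y\ra)^{\f12+\alpha+}k(x,x_1)k(y,y_1)(\la x_1\ra\la y_1\ra)^{\f12+\alpha+}(K_0+K_1+K_2)(x_1,y_1)$. Finally I would integrate in $(x_1,y_1)$ against $v(x_1)v(y_1)$ and apply Cauchy--Schwarz, using $\|K_j\|_{L^2}\les1$ and $\|v(x_1)\la x_1\ra^{\f12+\alpha+}k(x,x_1)\|_{L^2_{x_1}}\les1$ (valid when $v(x)\les\la x\ra^{-\f32-\alpha-}$), to conclude that $\eqref{stone2_er}=O(\la x\ra^{\f12+\alpha+}\la y\ra^{\f12+\alpha+}t^{-1-\alpha})$. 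The main obstacle is exactly the first point, reconciling the Hilbert--Schmidt (operator-level) control of $E(\lambda)$ with the pointwise, kernel-level manipulations that Lemma~\ref{lem:ibp2} requires; once the envelopes $K_j$ absorb this difficulty, the remaining power counting, including the harmless absorption of the $\log\lambda$ factors, is routine.
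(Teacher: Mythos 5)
Your proposal is correct and follows essentially the same route as the paper: extracting pointwise kernel envelopes from the Hilbert--Schmidt bounds of Lemma~\ref{Minverse} (the paper packages your $K_0,K_1,K_2$ into a single kernel $T_0$), combining them with the free-resolvent derivative and H\"older-difference bounds to control $\partial_\lambda\mathcal{E}$, applying Lemma~\ref{lem:ibp2}, and closing with Cauchy--Schwarz in $(x_1,y_1)$ against $v(x_1)v(y_1)$. The power counting, the $\alpha<\tfrac14$ constraint from the Lipschitz integral, and the final weighted $L^2$ estimate all match the paper's argument.
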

\begin{proof}
Let
\begin{multline*}
		T_0:= \sup_{0<\lambda<\lambda_1} \lambda^{-\frac{1}{2} } |E^{\pm}(\lambda)|+\sup_{0<\lambda<\lambda_1} \lambda^{\frac{1}{2} } |\partial_\lambda E^{\pm}(\lambda)| \\ + \sup_{0<\lambda<b\les\lambda<\lambda_1}  \f{\lambda^{\frac{1}{2}+\alpha}}{ (b-\lambda)^{\alpha}} |\partial_\lambda E^{\pm}(b)-\partial_\lambda E^\pm(\lambda)|.
\end{multline*}

By Lemma~\ref{Minverse}, we see that $T_0 $ is  Hilbert-Schmidt on $L^2(\R^2)$, and hence we have the following bounds for the kernels
$$
|E^{\pm}(\lambda)|\les \lambda^{\f12} T_0,\,\,\,\,|\partial_\lambda E^{\pm}(\lambda)| \les \lambda^{-\f12} T_0,
$$
$$
|\partial_\lambda E^{\pm}(b)-\partial_\lambda E^\pm(\lambda)|\les \lambda^{-\frac{1}{2}-\alpha}(b-\lambda)^{\alpha} T_0,\,\,\,\,\,\,\text{ if } 0<\lambda<b\les \lambda<\lambda_1.
$$
Moreover, using Lemma~\ref{R0 exp cor}  and Corollary~\ref{lipbound}, we have (for $0<\lambda<b\les \lambda<\lambda_1$)
\begin{align*}
&|R_0(\lambda^2)(x,x_1)| \les (1+ |\log \lambda|) k(x,x_1) \sqrt{w(x)}\les \lambda^{0-}k(x,x_1) \la x\ra^{0+},\\
&|\partial_\lambda R_0(\lambda^2)(x,x_1)|\les \f1\lambda + \lambda^{-\f12} \sqrt{ \la x\ra \la x_1\ra}, \\
&|\partial_\lambda R_0(\lambda^2)(x,x_1)-\partial_\lambda R_0(b^2)(x,x_1)|\les (b-\lambda)^\alpha \Big[\f1{\lambda^{1+\alpha}} + \f{|x-x_1|^{\f12+\alpha}}{\lambda^{\f12}}\Big].
\end{align*}
Therefore we have the bounds (for $0<\lambda<b\les \lambda<\lambda_1$)
\begin{align*}
&|\partial_\lambda\mathcal E(\lambda)| \les  \lambda^{-\f12-} (\la y \ra \la x\ra \la y_1\ra \la x_1\ra)^{\f12}  k(x,x_1)k(y,y_1)T_0(x_1,y_1),\\
&|\partial_\lambda \mathcal E (b)-\partial_\lambda \mathcal E (\lambda)|\les \lambda^{-\frac{1}{2}-\alpha-}(b-\lambda)^{\alpha} (\la y \ra \la x\ra \la y_1\ra \la x_1\ra)^{\f12+\alpha+}  k(x,x_1)k(y,y_1)T_0(x_1,y_1).
\end{align*}
Applying Lemma~\ref{lem:ibp2} as above yields the claim of the proposition.
\end{proof}
Note that Proposition~\ref{freeevol}, Proposition~\ref{prop:stone2_2}, Proposition~\ref{prop:stone2_3}, and Proposition~\ref{prop:stone2er} yield Theorem~\ref{thm:main}.

\section{Proof of Theorem~\ref{thm:mainineq} For Energies Away From Zero}

In this section we prove Theorem~\ref{thm:mainineq} for energies separated from zero:
\begin{theorem} \label{thm:mainineq high}
	Under the assumptions of Theorem~\ref{thm:main}, we have
	for $t>2$
	\be\label{weighteddecayhi}
		\sup_{L\geq1}\bigg| \int_0^\infty e^{it\lambda^2}\lambda \widetilde{\chi}(\lambda) \chi(\lambda/L)
		 [R_V^+(\lambda^2)-R_V^-(\lambda^2)](x,y) d\lambda \bigg|
		 \les\f{\la x\ra^{\f32}\la y\ra^{\f32}}{t^{\f32}}
	\ee
where $\widetilde\chi=1-\chi$.
\end{theorem}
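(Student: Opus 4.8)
The plan is to reduce \eqref{weighteddecayhi} to a high-energy oscillatory integral in $\lambda$ and to estimate it by integrating by parts \emph{only} in the Gaussian phase $e^{it\lambda^2}$, never in the combined phase $t\lambda^2\pm\lambda\rho$. This sidesteps any stationary-phase analysis entirely: the oscillatory factors $e^{\pm i\lambda|x-y|}$ produced by the free resolvents (see \eqref{r0high} and \eqref{JYasymp2}) are kept inside the amplitude and merely bounded in modulus, at the cost of producing powers of the distances $|x-y|$; these are then absorbed into $\la x\ra^{\f32}\la y\ra^{\f32}$. It is exactly the freedom to allow polynomial growth in $x,y$ that makes this crude but clean estimate possible. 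Since the target bound is symmetric in the two boundary values, I would bound the contributions of $R_V^+$ and $R_V^-$ separately, never relying on cancellation between them, so that on each piece every free resolvent carries a single fixed sign $e^{+i\lambda\rho}$ (resp. $e^{-i\lambda\rho}$) and no small denominators $2t\lambda\pm\rho$ ever appear.

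First I would insert the resolvent identity $R_V^\pm=R_0^\pm-R_0^\pm VR_V^\pm$ applied twice (from both sides),
$$R_V^\pm(\lambda^2)=R_0^\pm(\lambda^2)-R_0^\pm(\lambda^2)VR_0^\pm(\lambda^2)+R_0^\pm(\lambda^2)\,VR_V^\pm(\lambda^2)V\,R_0^\pm(\lambda^2),$$
splitting \eqref{weighteddecayhi} into a free term, a first Born term, and a remainder. The structural point is that each interior vertex now carries the \emph{full} potential $V\les\la x\ra^{-2\beta}$ rather than the factor $v\les\la x\ra^{-\beta}$ used in the symmetric low-energy identity of Lemma~\ref{Minverse}; because $2\beta>3$, this extra spatial decay is what allows the argument to close under the single hypothesis $\beta>\f32$.

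For the free and Born terms the amplitude is smooth in $\lambda$ (in the Born term the middle factor $V$ is $\lambda$-independent), so I would integrate by parts twice in $e^{it\lambda^2}$ as in Lemma~\ref{lem:ibp}, using $e^{it\lambda^2}=\partial_\lambda e^{it\lambda^2}/(2it\lambda)$ on the support $\lambda\gtrsim\lambda_1$ where $\lambda$ is bounded below. Each derivative landing on a factor $e^{\pm i\lambda\rho}$ produces a factor $\rho$, while the amplitude decay $\omega_\pm(\lambda\rho)=\widetilde O((1+\lambda\rho)^{-\f12})$ returns a factor $\rho^{-\f12}$ and, crucially, enough powers of $\lambda^{-1}$ that the resulting $\lambda$-integral converges absolutely and uniformly in $L$ (the boundary terms generated by $\chi(\lambda/L)$ vanish as $L\to\infty$). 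Two derivatives thus cost $\rho^{\f32}\les\la x\ra^{\f32}\la y\ra^{\f32}$ from the outermost distances and yield the gain $t^{-2}$; the interior distance in the Born term is controlled by the $V$-decay together with the local $L^2$-integrability of $\rho^{-\f12}$ in $\R^2$, for which the extra factor $|x_1-y|^{-\f12}$ gives just enough decay at infinity for $\beta>\f32$. This already produces $t^{-2}\la x\ra^{\f32}\la y\ra^{\f32}$, which is stronger than required.

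The main obstacle is the remainder $R_0^\pm VR_V^\pm VR_0^\pm$, whose middle factor depends on $\lambda$ through the perturbed resolvent. Here I would invoke the limiting absorption principle to bound $VR_V^\pm(\lambda^2)V$ together with its first $\lambda$-derivative and a Hölder-$\f12$ modulus of continuity of that derivative, as operators on $L^2$ decaying in $\lambda$; this is legitimate uniformly for $\lambda\gtrsim\lambda_1$ (using standard spectral theory, i.e. invertibility of the relevant operators for all $\lambda>0$) precisely because $2\beta>\f52$ supplies the weight needed for these bounds. Rather than a second honest derivative on $R_V^\pm$ I would then apply the refined Lemma~\ref{lem:ibp2}: its first term gives $t^{-2}$, and its Lipschitz term, fed with the Hölder-$\f12$ bound and the shift $b-\lambda\sim(t\lambda)^{-1}$, gives the decisive $t^{-\f32}$, the outer free resolvents again furnishing the distance factors absorbed into $\la x\ra^{\f32}\la y\ra^{\f32}$ and the $\lambda$-decay needed for convergence. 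Assembling the three contributions yields $t^{-\f32}\la x\ra^{\f32}\la y\ra^{\f32}$ uniformly in $L$. The delicate work, where I expect to spend the most effort, lies in establishing these high-energy limiting absorption bounds (with $\lambda$-decay and the Hölder-$\f12$ estimate on $\partial_\lambda(VR_V^\pm V)$) and in the bookkeeping that keeps every spatial integral convergent while leaving exactly the external weight $\la x\ra^{\f32}\la y\ra^{\f32}$, all under $\beta>\f32$.
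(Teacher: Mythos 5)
Your overall architecture is a legitimate variant of the paper's: both proofs expand $R_V^\pm$ into free plus Born plus remainder, bound $+$ and $-$ separately on the iterated terms, and extract $t^{-3/2}$ from exactly one piece via the shifted/H\"older integration by parts of Lemma~\ref{lem:ibp2} while getting $t^{-2}$ elsewhere. You distribute the labor differently, though: the paper iterates $M\ge 2$ times on each side so that the tail carries enough untouched free resolvents to absorb two full $\lambda$-derivatives and supply $\la \lambda\ra^{-2-}$ decay via \eqref{free lap} (hence $t^{-2}$ there), and it applies the H\"older trick to the finite Born terms, where the cost per vertex is only $d_k^{\f12+\alpha}$ with $\alpha<2\beta-\f52$ -- this is what makes arbitrarily long Born chains summable under $\beta>\f32$. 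You instead stop after one iteration, pay $d^{\f32}$ on the single Born term (which closes under $\beta>\f32$ only because its one interior vertex is flanked by the two external points, so the $\la x_1\ra^{\f32}$ weight can lean on the neighboring $(1+\lambda d)^{-\f12}$ factor or on the bounded support of the non-oscillatory piece), and move the H\"older trick onto the remainder via a H\"older-$\f12$ LAP bound for $\partial_\lambda(VR_V^\pm V)$, which indeed follows by interpolating the $k=1,2$ cases of \eqref{lap} and needs exactly $|V|\les\la x\ra^{-3-}$. That part of your plan is sound and arguably more economical.

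There is, however, one concrete step that fails: your decision to ``never rely on cancellation between $+$ and $-$'' cannot be applied to the zeroth-order (free) term. Each of $R_0^\pm(\lambda^2)(x,y)$ individually contains the non-oscillatory piece $-\f14 Y_0(\lambda|x-y|)$, which on the region $\lambda|x-y|\les 1$ equals $-\f1{2\pi}\log(\lambda|x-y|/2)-\f{\gamma}{2\pi}+\cdots$ by \eqref{Y0 def}; this piece is not damped by any $(1+\lambda\rho)^{-\f12}$ factor, and after two integrations by parts it contributes a term of the form $c(t,L)\log|x-y|$ with $c(t,L)=O(t^{-2})$ but not zero. This blows up logarithmically on the diagonal and is not dominated by $\la x\ra^{\f32}\la y\ra^{\f32}t^{-\f32}$ (indeed $R_0^\pm(\lambda^2)(x,x)=\infty$, so the one-sided integral is not even finite there), and a kernel with a $\log|x-y|$ singularity does not map $L^1\to L^\infty$, so the defect survives into Theorem~\ref{thm:main}. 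For $m\ge 1$ and for the remainder the separate treatment is harmless because every logarithmic singularity sits at a vertex that is integrated against $V$ (or handled by the weighted $L^2$ bound \eqref{R0 wtdL2}); only the free term has no interior integration to wash it out. The fix is exactly what the paper does for $m=0$: use $R_0^+(\lambda^2)-R_0^-(\lambda^2)=\f{i}{2}J_0(\lambda|x-y|)$, whose kernel is bounded, and argue as in Proposition~\ref{freeevol}. With that single correction your route goes through.
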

\begin{proof}
We start with the resolvent expansion
\begin{align}
    R_V^{\pm}(\lambda^2)&=\sum_{m=0}^{2M+2} R_0^{\pm}(\lambda^2)(-VR_0^{\pm}(\lambda^2))^m \label{born series}\\
    &+R_0^{\pm}(\lambda^2)(VR_0^{\pm}(\lambda^2))^M VR_V^{\pm}(\lambda^2) V (R_0^{\pm}(\lambda^2)V)^M R_0^{\pm}(\lambda^2).
    \label{born tail}
\end{align}
We first note that the contribution of the term $m=0$ can be handled as in Proposition~\ref{freeevol} and it can be bounded by 
$\frac{\la x\ra^{\f32}\la y\ra^{\f32}}{t^2}$. For the case $m>0$ we won't make use of any  cancellation between `$\pm$' terms. Thus, we will only consider $R_0^-$, and drop the `$\pm $' signs.
Using \eqref{R0 def}, \eqref{J0 def}, \eqref{Y0 def}, and \eqref{JYasymp2} we write
\begin{align*}
    R_0(\lambda^2)(x,y)=e^{-i\lambda|x-y|}\rho_+(\lambda|x-y|)+\rho_-(\lambda|x-y|),
\end{align*}
where $\rho_+$ and $\rho_-$ are supported on the sets $[1/4,\infty)$ and $[0, 1/2]$, respectively.
Moreover, we have the bounds
\be\label{omega bds}
    \rho_-(y) =\widetilde O(1+|\log y|),\,\,\,\,\,\,\, \rho_+(y) =\widetilde O\big( (1+|y|)^{-1/2}\big)\\
\ee

We first control the contribution of the finite born series, \eqref{born series}, for $m>0$.
Note that the contribution of the $m$th term of \eqref{born series} to the integral in \eqref{weighteddecayhi} can be written as a sum of integrals of the form
\be 
		\int_{\R^{2m}} \int_0^\infty e^{it\lambda^2}\lambda \widetilde \chi(\lambda)
		\chi(\lambda/L) e^{- i \lambda \sum_{j\in J}d_j} \prod_{j\in J} \rho_+
		(\lambda d_j)
		\prod_{\ell\in J^*} \rho_-(\lambda d_\ell) 
		\prod_{n=1}^{m} V(x_n) \, d\lambda \, dx_1\dots dx_{m},\label{high born1}
\ee
where $d_j=|x_{j-1}-x_j|$ and $J\cup J^*$ is a partition of $\{1,...,m,m+1\}.$ 
Let 
$$
\mathcal E(\lambda) := \widetilde \chi(\lambda)
		\chi(\lambda/L) e^{- i \lambda \sum_{j\in J}d_j} \prod_{j\in J} \rho_+
		(\lambda d_j)
		\prod_{\ell\in J^*} \rho_-(\lambda d_\ell).
$$
To estimate the derivatives of $\mathcal E$, we note that
\begin{align*}
\big|\partial_\lambda^k \big[\rho_+(\lambda d_j)\big]\big|&\les \frac{d_j^k}{(1+\lambda d_j)^{k+1/2}},\,\,\,\,\,\,\,k=0,1,2,...,\\
\big|\partial_\lambda^k \big[\rho_-(\lambda d_j)\big]\big|&\les \f1{\lambda^k},\,\,\,\,k=1,2,...
\end{align*}
Using the monotonicity of $\log^-$ function, we also obtain
$$
\widetilde\chi(\lambda) \big|\rho_-(\lambda d_j)\big|\les \widetilde \chi(\lambda) (1+|\log(\lambda d_j)|) \chi_{\{0<\lambda d_j\leq 1/2\}}
\les \widetilde \chi(\lambda) (1+\log^-(\lambda d_j))\les
1+\log^-(d_j).
$$
It is also easy to see that
$$
\Big|\frac{d^k}{d\lambda^k} \chi(\lambda/L)\Big|\les \lambda^{-k}.
$$
Finally, noting that
 $(\widetilde \chi)^{\prime}$ is supported on the set $\{\lambda\approx 1\}$, we can estimate
\begin{multline} \label{ehiprime}
\big|\partial_\lambda \mathcal E\big| \les \widetilde \chi(\lambda) \Big(\f1\lambda +\sum_{k\in J}\big(d_k + \frac{d_k}{1+\lambda d_k} \big)\Big)
\prod_{j\in J} \frac{1}{(1+\lambda d_j)^{1/2}}
		\prod_{\ell\in J^*} (1+\log^-(d_\ell))\\
\les  \widetilde \chi(\lambda) \Big(\f1\lambda+\sum_{k\in J} \frac{d_k}{(1+\lambda d_k)^{1/2}}  \Big)
		\prod_{\ell\in J^*} (1+\log^-(d_\ell))
\les  \widetilde \chi(\lambda) \Big(\lambda^{-1}+\sum_{k\in J} d_k^{\f12} \lambda^{-\f12}  \Big)
		\prod_{\ell\in J^*} (1+\log^-(d_\ell))\\
\les \widetilde \chi(\lambda) \lambda^{-\f12} \prod_{k=0}^{m+1} \la x_k\ra^{\f12}  
		\prod_{\ell=1}^{m+1} (1+\log^-(d_\ell)).
\end{multline}
We also have
\begin{multline} \label{ehiprime2}
\big|\partial_\lambda^2 \mathcal E\big| \les \widetilde \chi(\lambda) \Big(\f1{\lambda^2}+\sum_{k\in J}\big(d_k^2 + \frac{d_k^2}{(1+\lambda d_k)^2}\big)   \Big)
\prod_{j\in J} \frac{1}{(1+\lambda d_j)^{1/2}}
		\prod_{\ell\in J^*} (1+\log^-(d_\ell))\\
\les  \widetilde \chi(\lambda) \Big(\lambda^{-2}+\sum_{k\in J} d_k^{\f32} \lambda^{-\f12}   \Big)
		\prod_{\ell\in J^*} (1+\log^-(d_\ell))\les \widetilde \chi(\lambda) \lambda^{-\f12} \prod_{k=0}^{m+1} \la x_k\ra^{\f32}
		\prod_{\ell=1}^{m+1} (1+\log^-(d_\ell)).
\end{multline} 
Using Lemma~\ref{lem:ibp2} (and taking the support condition of $\widetilde \chi$ into account), we can bound the $\lambda$ integral in \eqref{high born1} by
\be\label{hilip}
\f1{t^2}\int_0^{\infty}\frac{|\mathcal E^\prime(\lambda)|}{ \lambda^2 } d\lambda
+\f1{t}\int_{0}^\infty \big|  \mathcal E^\prime(\lambda \sqrt{1+ \pi t^{-1}\lambda^{-2}} )-\mathcal E^\prime(\lambda)  \big| d\lambda,
\ee
Using \eqref{ehiprime}, we can bound the first integral in \eqref{hilip} by
\be\label{hilip1}
 \prod_{k=0}^{m+1} \la x_k \ra^{\f12} \prod_{\ell=1}^{m+1} (1+\log^-(d_\ell))\int_{0}^\infty \widetilde \chi(\lambda) \lambda^{-5/2} d\lambda\les \prod_{k=0}^{m+1} \la x_k \ra^{\f12} \prod_{\ell=1}^{m+1} (1+\log^-(d_\ell)).
\ee
To estimate the second integral in \eqref{hilip} first note that
\be\label{bminusa}
\lambda \sqrt{1+ \pi t^{-1}\lambda^{-2}}  - \lambda \approx \f1{t\lambda}.
\ee
Next  using \eqref{bminusa}, \eqref{ehiprime} and \eqref{ehiprime2}, we have (for any $0\leq \alpha \leq 1 $)
\begin{multline} \label{ehidiff}
 \big|  \mathcal E^\prime(\lambda \sqrt{1+ \pi t^{-1}\lambda^{-2}} )-\mathcal E^\prime(\lambda)  \big| \\ \les 
 \widetilde\chi(2\lambda) \lambda^{-\f12} \prod_{k=0}^{m+1} \la x_k\ra^{\f12}
		\prod_{\ell=1}^{m+1} (1+\log^-(d_\ell)) \,
 \min\Big(1, \f1{t\lambda} \prod_{k=0}^{m+1} \la x_k\ra\Big)\\
\les t^{-\alpha} \widetilde\chi(2\lambda) \lambda^{-\f12-\alpha} \prod_{k=0}^{m+1} \la x_k\ra^{\f12+\alpha}
		\prod_{\ell=1}^{m+1} (1+\log^-(d_\ell)).  
\end{multline}
Using this bound for $\alpha\in(1/2,1]$, we bound the second integral in \eqref{hilip} by
\begin{multline} \label{hilip2}
t^{-\alpha} \prod_{k=0}^{m+1} \la x_k\ra^{\f12+\alpha}
		\prod_{\ell=1}^{m+1} (1+\log^-(d_\ell)) \int_0^\infty \widetilde\chi(2\lambda) \lambda^{-\f12-\alpha} \les  \\ \les t^{-\alpha} \prod_{k=0}^{m+1} \la x_k\ra^{\f12+\alpha}
		\prod_{\ell=1}^{m+1} (1+\log^-(d_\ell)).
\end{multline}
Combining \eqref{hilip1} and \eqref{hilip2}, we obtain  
$$
|\eqref{hilip}|\les  t^{-1-\alpha} \prod_{k=0}^{m+1} \la x_k\ra^{\f12+\alpha}
		\prod_{\ell=1}^{m+1} (1+\log^-(d_\ell))
$$
Using this (with $\f12<\alpha<2\beta-\f52$) in \eqref{high born1}, we obtain 
\begin{multline*}
|\eqref{high born1}|\les  t^{-1-\alpha}
		\int_{\R^{2m}}  \prod_{k=0}^{m+1} \la x_k\ra^{\f12+\alpha}
		\prod_{\ell=1}^{m+1} (1+\log^-(d_\ell))
		\prod_{n=1}^{m} |V(x_n)|   \, dx_1\dots dx_{m}\\ \les \f{\la x_0\ra^{\f12+\alpha}\la x_{m+1}\ra^{\f12+\alpha}}{t^{\f32}}.
\end{multline*}

To control the remainder of the born series, \eqref{born tail}, we employ the limiting absorption principle,
see \cite{agmon},
\begin{align}\label{lap}
	\|\partial_\lambda^k R_V^{\pm}(\lambda^2)\|_{L^{2,\sigma}(\R^2)\to L^{2,-\sigma}(\R^2)}<\infty,
\end{align}
for $k=0,1,2$ with $\sigma>k+\frac{1}{2}$.  Similar bounds hold for the derivatives of the free resolvent.
In addition, for the free resolvent one has
\begin{align}\label{free lap}
	\|R_0^{\pm}(\lambda^2)\|_{L^{2,\sigma}(\R^2)\to L^{2,-\sigma}(\R^2)}\les \lambda^{-1+},
\end{align}
which is valid for $\sigma>\frac{1}{2}$.  Using the representation \eqref{omega bds},
we note the following bounds on the free resolvent
which are valid on $\lambda>\lambda_1>0$,
\begin{align*}
	|\partial_\lambda^k R_0^{\pm}(\lambda^2)(x,y)|\les |x-y|^k \left\{\begin{array}{ll}
	|\log(\lambda|x-y|)| & 0<\lambda|x-y|<\frac{1}{2}\\
	(\lambda|x-y|)^{-\frac{1}{2}} & \lambda|x-y|\gtrsim 1\end{array}\right.
	\les \lambda^{-\frac{1}{2}}|x-y|^{k-\frac{1}{2}}.
\end{align*}
Thus, for $\sigma>\f12+k$,
\begin{align}\label{R0 wtdL2}
	\|\partial_\lambda^k &R_0^{\pm}(\lambda^2)(x,y)\la y\ra^{-\sigma}\|_{L^2_y} 
	\les\lambda^{-\f12} \Big[\int_{\R^2} \frac{|x-y|^{2k-1}}{\la y\ra^{2\sigma}}\, dy\Big]^{\f12}
	\les \lambda^{-\f12} \la x\ra^{\max(0,k-1/2)}.
\end{align}
Once again, we estimate the $R_V^+$ and $R_V^-$ terms separately and omit the `$\pm$' signs.

	We  write the contribution of  \eqref{born tail} to \eqref{weighteddecayhi} as
	\begin{align}\label{I def}
		 \int_0^\infty e^{it\lambda^2} \lambda \, \mathcal E(\lambda)(x,y)\, d\lambda,
	\end{align}
where
$$
\mathcal E(\lambda)(x,y) = \widetilde{\chi}(\lambda) \chi(\lambda/L)	\big\la VR_V^{\pm}(\lambda^2) V (R_0^{\pm}(\lambda^2)V)^M R_0^{\pm}(\lambda^2)(\cdot,x), (R_0^{\pm}(\lambda^2)V)^M R_0^{\pm}(\lambda^2)(\cdot,y) \big\ra.
$$
Using \eqref{lap}, \eqref{free lap}, and \eqref{R0 wtdL2} (provided that $M\geq 2$) we see that 
	\begin{align}\label{a deriv bds}
		\big|\partial_\lambda^k \mathcal E(\lambda)(x,y)\big|
		&\les \widetilde{\chi}(\lambda) \chi(\lambda/L) \la \lambda \ra^{-2-}\la x\ra^{\f32}\la y\ra^{\f32}, \qquad k=0,1,2.
	\end{align}
	This requires that $|V(x)|\les \la x\ra^{-3-}$. One can see that the requirement on the decay rate of the potential arises when, for instance,
	both $\lambda$ derivatives act on one resolvent, this twice differentiated resolvent operator maps
	$L^{2,\frac{5}{2}+}\to L^{2,-\frac{5}{2}-}$ by \eqref{lap},  or is in $L^{2,-\frac{5}{2}-}$ by \eqref{R0 wtdL2}.
	The potential then needs to map
	$L^{2,-\frac{5}{2}-}\to L^{2,\frac{1}{2}+}$ for the next application of the limiting absorption principle.
	This is satisfied if $|V(x)|\les \la x\ra^{-3-}$.
	
	The required bound now follows by integrating by parts twice:
	\begin{align*}
		|\eqref{I def}|\les |t|^{-2} \int_0^\infty \bigg|\partial_\lambda
		\bigg(\frac{\partial_\lambda \,\mathcal E(\lambda)(x,y)}{\lambda}\bigg)\bigg|\, d\lambda
		\les |t|^{-2} \la x\ra^{\frac{3}{2}}\la y\ra^{\frac{3}{2}}.
	\end{align*}
	
\end{proof}

\begin{large}
\noindent
{\bf Acknowledgment. \\}
\end{large}
The authors would like to thank Wilhelm Schlag for suggesting this problem. The first author was partially supported by National Science Foundation grant  DMS-0900865.

\end{document}